\documentclass[a4paper,12pt]{article}

\usepackage{authblk}

\usepackage{cmap}
\usepackage{physics}
\usepackage[utf8]{inputenc}
\usepackage[english]{babel}
\usepackage{amsmath,amssymb,amsthm}
\usepackage{geometry}
\usepackage[colorlinks,linkcolor=blue]{hyperref}
\usepackage{graphicx}
\graphicspath{ {./Images/} }

\newtheorem{thm}{Theorem}[section]
\newtheorem{lem}[thm]{Lemma}
\newtheorem{defi}[thm]{Definition}

\newtheorem{col}[thm]{Corollary}
\newtheorem{rem}[thm]{Remark}

\newcounter{namedenvcounter}[section]

\renewcommand{\phi}{\varphi}
\newcommand{\nul}{{\bf0}}

\newcommand{\R}{\mathbb{R}}
\newcommand{\Z}{\mathbb{Z}}
\newcommand{\N}{\mathbb{N}}
\newcommand{\T}{\mathbb{T}}
\newcommand{\h}{\widehat}
\newcommand{\w}{\widetilde}
\newcommand{\ol}{\overline}
\newcommand{\smlm}{\sum\limits}

\renewcommand{\epsilon}{\varepsilon}

\title{Approximation by quasi-projection operators and dual wavelet frames in Sobolev spaces}

\author{
    Danila Kotov\thanks{\href{https://orcid.org/0009-0009-0362-9860}{https://orcid.org/0009-0009-0362-9860}}, 
    Aleksandr Krivoshein\thanks{\href{https://orcid.org/0000-0002-4619-474X}{https://orcid.org/0000-0002-4619-474X}} 
    \\
    {\small 
        Department of Applied Mathematics and Control Processes\\
        Saint Petersburg State University, Russia \\
        st117038@student.spbu.ru, a.krivoshein@spbu.ru
    }
}

\date{}

\begin{document}
\maketitle

\abstract{For a quasi‑projection operator $Q_j(f,\phi, \w\phi)$ formed by a
compactly supported function $\phi$ and a compactly supported distribution $\w\phi$ with dilation matrix $M$, we establish necessary and sufficient conditions under which it provides prescribed simultaneous approximation and simultaneous density orders for a function $f$ from the Sobolev space $H^s(\R^d)$ and for its derivatives. 
 The obtained criteria are then used to endow MRA‑based dual wavelet frames in a pair of dual Sobolev spaces with the desired simultaneous approximation properties.
}

Keywords:     quasi-projection operators, Sobolev spaces, approximation order,
density order, Strang–Fix conditions,  MRA-based wavelets, dual wavelet frames

MSC2020: 41A25, 41A35, 42C15, 42C40

\section{Introduction}

Wavelet systems are a fundamental tool for solving many problems in signal processing. The flexibility inherent in their construction allows one to endow them with a specific set of properties tailored to a particular application. Among such tunable properties are time–frequency localization, approximation order, smoothness, symmetry, support size and the type of the system (such as orthogonal/biorthogonal bases, frames, frame‑like systems, multi‑wavelets, wavelets on locally compact abelian groups, etc.).
This paper is devoted to studying the conditions under which wavelet systems and related quasi‑projection operators provide a prescribed approximation order in Sobolev norms. This is useful in cases where one needs to provide a good approximation not only of the function itself, but also of its derivatives up to a chosen order.

A quasi-projection operator is defined by
\begin{equation}
Q_j(f) = Q_j(f,\phi, \w\phi) =|\det M| \sum\limits_{k \in \Z^d} \langle f, \w\phi(M^j\cdot+k)\rangle \ \phi(M^j\cdot+k),
\label{QjDef}
\end{equation}
where $j\in\Z$, $M$ is a dilation matrix and
and $\phi$, $\w\phi$ are given functions.
Such operators include classic orthogonal projections in $L_2(\R^d)$, 
the Whittaker–Shannon sampling formula, and the Kantorovich–Kotelnikov type operators.
Approximation properties of the quasi‑projection operators
\( Q_j(f) \) as \( j \to +\infty \)
have been actively studied in the literature in various settings (see, for instance,~\cite{Butzer2006, Butzer2010, Butzer2005, Lebedeva,
        Jia, KKS, KS, KrS11, KrS17} and the references therein). 
        In particular, approximation by quasi‑projection operators in Sobolev spaces was also investigated.
In \cite{Kyriazis} the properties of quasi‑projection operators in Triebel–Lizorkin spaces were studied, and, in particular, it was established that for functions \( \phi \) and \( \widetilde{\phi} \) decaying sufficiently fast and bounded together with their derivatives up to order \( n \), under the conditions that $\phi$ satisfies the Strang–Fix condition of order $n$, $\widehat{\phi}(0) = 1$, $ D^{\beta} \widehat{\phi}(0) = 0$ for all $1 \le \beta < n$,
    \( \widehat{\widetilde{\phi}}(0) = 1 \),
    \( D^{\beta} \widehat{\widetilde{\phi}}(0) = 0 \) for all \( 1 \le \beta < n \),
the following inequality holds:
\begin{equation}
    |f - Q_j(f, \phi, \widetilde{\phi})|_{W_p^s} \le C \, m^{-j(n-s)} |f|_{W_p^n}, \quad j \ge 0,   
    \label{fMainEst1D}
\end{equation}
for all \( f \in W_p^n(\mathbb{R}) \), where \( 0 \le s < n \), \( s, n \in \mathbb{N} \), \( 1 \le p \le \infty \), $M=m > 1$ and $|\cdot|_{W_p^n}$ is
the Sobolev seminorm, i.e. $|f|_{W_p^n} = \|D^n f\|_{L_p}$. 
In~\cite{Jia} approximation by quasi-projection operators in Besov spaces was studied and, in particular, it was established that
inequality~(\ref{fMainEst1D}) holds for all $f\in W_p^n(\R)$ under the conditions that $\phi \in W_p^s(\R)$, $\w\phi \in L_q(\R)$, $\frac 1p + \frac 1q = 1$, and $Q_0(w, \phi, \w\phi) = w$ for any polynomial $w$ of degree less than $n$.

The aim of this paper is twofold. First, we obtain an estimate of the form~(\ref{fMainEst1D}) in the case, where $p=2$, $\w\phi$ is a compactly supported distribution and $\phi$ is a compactly supported function which belong to a pair of dual Sobolev spaces $H^{-s}(\R^d)$ and $H^{s}(\R^d)$, respectively. Second, we prove the necessity of the conditions that guarantee such an estimate.
The main contributions with respect to the aforementioned results are the proof of necessity and the fact that
$\w\phi$ can be taken as a distribution.
This setting is particularly useful for applying the obtained results to study
the approximation properties of MRA-based wavelets.

The paper is organized as follows. 
In Section 2 notation and auxiliary results are introduced.
In Section~3, we establish necessary and sufficient conditions under which the quasi‑projection operator $Q_j$ attains a prescribed simultaneous approximation order.
Conditions for a stronger notion of simultaneous density order are also discussed. 
In Section~4 the obtained results are used to show how MRA‑based dual wavelet frames in a pair of dual Sobolev spaces can be endowed with the desired simultaneous approximation properties.

\section{Notation and auxiliary results}

For vectors $x, y\in\R^d$ their inner product
    is denoted by $(x, y)$;
    the standard norm of a vector $x\in\R^d$ is denoted by
    $|x| = \sqrt {(x, x)}$,
    $B_{\delta}(x)$ denotes the open ball of radius $\delta$ centered at $x$,
    $\nul$ is the origin in $\R^d$,
    $\N_0 = \N \cup \{0\},$
    $\T^d = [-\frac12,\frac12)^d$.
    For a function $f$ defined on $\R^d$
    we set $f_{jk}(x) = m^{j/2} f(M^j x + k)$, where $j\in\Z$, 
    $k\in\Z^d$, $x\in \R^d$ and $M$ here and below denotes a dilation matrix, 
    i.e.
    $M\in \Z^{d\times d}$ and all its eigenvalues are greater than 1 
    in modulus, $m = |\det M|.$
    
    The spectrum of 
	matrix $M^{-1}$ (i.e. the set of its eigenvalues) is located inside the ball centered at the origin with
    radius $\rho(M^{-1})$,
	where $\rho(M)=\lim\limits_{j\to+\infty}\|M^{j}\|^{1/j}$
	is the \textit{spectral radius} of matrix $M$. Since $\rho(M^{-1})$ is
    also equal to the maximum eigenvalue of $M^{-1}$ in modulus, 
    then $\rho(M^{-1})<1.$ 
    Also, for every $\theta >\rho(M^{-1})$ there exists $C_{M,\theta}>0$ such that
    $\|M^{-j}\|\le {C_{M,\theta}}\,\theta^j$  for all $j\ge0$.
    	In particular, we can take $\theta < 1$, 
        which yields that $\lim\limits_{j\to+\infty}\|M^{-j}\|=0.$
        Moreover, we always can take $N > 0$ such that $\theta^N m < 1$ and, hence,
\begin{equation}
        m^j \|M^{-j}\|^N \le m^j \ {C^N_{M,\theta}} \ \theta^{jN} \to 0
        \quad \text{as} \ j \to +\infty.
\label{fMjEstimation2}
\end{equation}
A dilation matrix $M$ is called \textit{isotropic} if all its eigenvalues $\lambda_1,\dots,\lambda_d$ are equal 
    in modulus, i.e. $\rho = \rho(M) = |\lambda_i| = m^{1/d}$, $i=1,\dots,d$.
    	Let $M$ be an isotropic matrix with spectral radius
	$\rho=\rho(M)$. By~\cite[Lemma 6.1]{Jia1998ApproxProp} there exist two positive constants
	$C_1$ and $C_2$ such that
	\begin{equation}
	C_1 \rho^j 
	\le
	\|M^j\|
	\le
	C_2 \rho^j 
    \label{fIsotropicM}
	\end{equation}
	for all $j \in \mathbb{Z}$. Note that $M^{-1}$ is also isotropic and 
    $\rho(M^{-1}) = \frac{1}{\rho(M)} = m^{-1/d}$.

 The Fourier coefficients of $f\in L_1(\T^d)$ are 
    $c_k(f) = \int\limits_{\T^d} f(x)e^{-2\pi i (x, k)}\,dx$, $k\in\Z^d$.
    For a sequence $a=\{a(k)\}_{k \in \Z^d} \in \ell_2(\Z^d)$ its
    symbol is a 1-periodic function in $L_2(\T^d)$ defined by
    $\h a(\xi) = \sum\limits_{k \in \Z^d} a(k) e^{2\pi i k \xi}$, 
    $\xi \in \T^d$.
For a function $f\in L_1(\R^d)$ its 
    Fourier transform 
    is defined by $\widehat
    f(\xi)=\int\limits_{\R^d} f(x)e^{-2\pi i (x, \xi)}\,dx,$ $\xi\in{\mathbb R}^d$.
    By standard arguments this definition extends to functions from $L_2(\R^d)$ and to tempered distributions.
    ${\mathcal S}$ denotes the Schwartz class of functions.
    The set of finitely supported sequences is denoted by $\ell_0(\mathbb{Z}^d)$.

Let $s \in \R$. The Sobolev space 
    $H^s(\R^d)$ consists of tempered
    distributions $f$ such that
    \[
    \|f\|^2_{H^s} := \int\limits_{\R^d} |\h f (\xi)|^2 (1 + |\xi|^2)^s d\xi < +\infty.
    \]
The inner product in $H^s(\R^d)$ can be defined by
    \[
    \langle f, g \rangle_{H^s} =
    \int\limits_{\R^d} \h f(\xi) \overline{\h{g}(\xi)}
    (1 + |\xi|^2)^s d \xi
    \]
and $H^s(\R^d)$ is a Hilbert space.
It is known that spaces $H^s(\R^d)$ and $H^{-s}(\R^d)$ 
    are dual spaces (see, e.g.,~\cite{Han}) and we can define  a bilinear form
    \[
    \langle f, g \rangle := 
    \int\limits_{\R^d} \h f (\xi) \overline{\h g(\xi)} d \xi
    \] 
for $f\in H^s(\R^d)$ and $g \in H^{-s}(\R^d)$. It is clear that 
    $\langle f, g \rangle \le  \|f\|^2_{H^s} \|g\|^2_{H^{-s}}.$ 
    Also, it is convenient to use the following notation
for $f\in H^s(\R^d)$
$$
|f|_{H^s}^2 := \int_{\R^d} |\xi|^{2s} |\h f(\xi)|^2 \dd \xi.
$$
Note that $(\|f\|_{L_2}^2 + |f|_{H^s}^2)^{1/2}$ is also a norm for function $f \in H^s(\R^d)$, which is equivalent to $\|f\|^2_{H^s}.$

\begin{defi}
    Let $s\in \R$.
    For two measurable functions 
    $f, g :\R^d \rightarrow \mathbb C$ 
    we define their $s$-bracket product as the 
    1-periodic function
    \[
    [f, g]_s(\xi) := \sum\limits_{k \in \Z^d} f(\xi + k)
    \ol{g(\xi + k)}(1 + |\xi + k|^2)^s,
    \]
    if the series makes sense for a.e. $\xi \in \R^d$.

    For convenience, we will also write 
    $[f, g] := [f,g]_0$, $[f, g]^{\neq\nul} := [f, g] - f \overline{g}$ and
    $[f, g]_s^{\neq\nul} := [f, g]_s - f \overline{g} (1+|\cdot|^2)^s$.
\end{defi}

\begin{rem}
    \label{rem1}
    By the monotone convergence theorem and Lebesgue's dominated convergence theorem it is straightforward to check that for 
    $f\in H^s(\R^d)$ the $s$-bracket 
    product $[\h f, \h f]_s$ is in $L_1(\T^d)$
    and 
    $c_k([\h f, \h f]_s) = \langle f, f_{0k} \rangle_{H^s}$ for $k\in\Z^d.$
    Hence, if function $f\in H^s(\R^d)$ has compact support,
    then $[\hat{f}, \hat{f}]_s$ is a trigonometric polynomial
    and $[\hat{f}, \hat{f}]_s \in L_\infty(\R^d)$.
    
    Also, by similar arguments for $f\in H^s(\R^d)$ and $g \in H^{-s}(\R^d)$
    their $0$-bracket product $[\h f, \h g]$ is in $L_1(\T^d)$
    and
    $c_k([\h f, \h g]) = \langle f, g_{0k} \rangle$.
\end{rem}

Several statements below establish some useful inequalities and basic properties of quasi-projection operators in $H^s(\R^d)$.

\begin{lem} \emph{\cite[Theorem 2.3]{Han}}
	Let $s\in \R,$
		$\w \phi \in H^{-s}(\R^d)$ such that $[\h {\w\phi}, \h{\w \phi}]_{-s} \in L_{\infty}(\R^d),$
        $f\in H^s(\R^d)$. 
        Then $[\h f, \h{\w\phi}] \in L_2(\T^d),$ 
		$[\h f, \h{\w \phi}](\xi) = \sum\limits_{k \in \Z^d}\langle f, \w\phi_{0k} \rangle e^{2\pi i (k,\xi)} $
	and
		\[
        \norm{[\h f, \h{\w \phi}]}^2_{L_2(\T^d)} = \sum\limits_{k\in \Z^d} |\langle f, \w\phi_{0k} \rangle|^2 \le
        \norm{[\h{\w\phi}, \h{\w\phi}]_{-s}}_{L_{\infty}} \norm{f}^2_{H^s}.
		\]
    \label{lemCoef}
\end{lem}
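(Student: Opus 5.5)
We compute the Fourier coefficients of $[\h f, \h{\w\phi}]$ directly and then apply Cauchy--Schwarz with the weights $(1+|\xi+k|^2)^{\pm s/2}$. The three claims of Lemma~\ref{lemCoef} follow in turn from these two steps.

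\textbf{Step 1 (integrability and coefficients).} By Remark~\ref{rem1}, $[\h f, \h{\w\phi}] \in L_1(\T^d)$ and $c_k([\h f, \h{\w\phi}]) = \langle f, \w\phi_{0k}\rangle$. To check this, write
\[
\int_{\T^d} \sum_{n} |\h f(\xi+n)|\,|\h{\w\phi}(\xi+n)|\,d\xi = \int_{\R^d} |\h f||\h{\w\phi}| \le \|f\|_{H^s}\|\w\phi\|_{H^{-s}},
\]
using $|\h f\h{\w\phi}| = |\h f|(1+|\cdot|^2)^{s/2}\,|\h{\w\phi}|(1+|\cdot|^2)^{-s/2}$. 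This justifies exchanging sum and integral. Next, note $\h{\w\phi_{0k}}(\xi) = e^{2\pi i (k,\xi)}\h{\w\phi}(\xi)$. Unfolding $\int_{\T^d}\sum_n$ into $\int_{\R^d}$ and using the 1-periodicity of the exponential then gives the coefficient formula. (Sign conventions in the exponent must be matched with the paper's definition of the symbol. The displayed series $\sum_k \langle f,\w\phi_{0k}\rangle e^{2\pi i(k,\xi)}$ is exactly the paper's symbol convention.)

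\textbf{Step 2 ($L_2$ bound).} Pointwise for a.e.\ $\xi$, Cauchy--Schwarz over $n\in\Z^d$ gives
\[
|[\h f,\h{\w\phi}](\xi)|^2 \le [\h f,\h f]_s(\xi)\,[\h{\w\phi},\h{\w\phi}]_{-s}(\xi) \le \|[\h{\w\phi},\h{\w\phi}]_{-s}\|_{L_\infty}\,[\h f,\h f]_s(\xi).
\]
Integrating over $\T^d$ and using $\int_{\T^d}[\h f,\h f]_s = \|f\|_{H^s}^2$ (monotone convergence) yields $[\h f,\h{\w\phi}]\in L_2(\T^d)$ together with the stated bound.

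\textbf{Step 3 (conclusion).} Since $[\h f,\h{\w\phi}] \in L_2(\T^d)$, Parseval's identity applies: its norm squared equals $\sum_k |\langle f,\w\phi_{0k}\rangle|^2$, and its Fourier series converges to it in $L_2$. This gives both the series representation and the equality.

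The only delicate point is Step 1: the interchange of summation and integration, and matching the sign conventions. Absolute convergence handles the interchange, so no real obstacle is expected.
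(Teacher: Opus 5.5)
Your proof is correct and takes the standard route. The coefficient identity is what Remark~\ref{rem1} records, and the pointwise weighted Cauchy--Schwarz bound followed by Parseval is the same device the paper uses in the proofs of Lemmas~\ref{lem3} and~\ref{lemGjMain}; the paper gives no separate proof of this lemma and only cites \cite[Theorem 2.3]{Han}, so your argument fills in that reference.
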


\begin{lem}
	Let $s\in \R,$
		$\phi \in H^{s}(\R^d)$ such that $[\h {\phi}, 
        \h{\phi}]_{s} \in L_{\infty}(\R^d),$ 
        $a=\{a(k)\}_{k \in \Z^d} \in \ell_2(\Z^d)$. 
    Then the series $\sum\limits_{k\in\Z^d}a(k)\phi_{0k}$ 
    converges unconditionally in $H^s(\R^d)$ and 
		\[
        \norm{\sum_{k \in \Z^d}a(k)\phi_{0k}}^2_{H^s} \le
        \norm{[\h\phi,\h\phi]_s}_{L_\infty}
		\norm{\h a}^2_{L_2(\T^d)}.
        \]
\end{lem}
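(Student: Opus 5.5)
The plan is to work on the Fourier side. For a finitely supported sequence $a\in\ell_0(\Z^d)$, the sum $g=\sum_k a(k)\phi_{0k}$ is a finite sum, so it lies in $H^s(\R^d)$. Since $\widehat{\phi_{0k}}(\xi)=e^{2\pi i (k,\xi)}\h\phi(\xi)$, we have $\h g(\xi)=\h a(\xi)\h\phi(\xi)$, where $\h a$ is the $1$-periodic trigonometric polynomial. Then
\[
\|g\|_{H^s}^2=\int_{\R^d}|\h a(\xi)|^2|\h\phi(\xi)|^2(1+|\xi|^2)^s\,d\xi .
\]
Split $\R^d$ into the translates $\T^d+k$ and use periodicity of $\h a$. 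The monotone convergence theorem lets us interchange sum and integral, which gives
\[
\|g\|_{H^s}^2=\int_{\T^d}|\h a(\xi)|^2[\h\phi,\h\phi]_s(\xi)\,d\xi\le \|[\h\phi,\h\phi]_s\|_{L_\infty}\|\h a\|_{L_2(\T^d)}^2 .
\]
So the inequality holds on $\ell_0(\Z^d)$.

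Next, extend to $a\in\ell_2(\Z^d)$ and prove unconditional convergence. Let $F\subset\Z^d$ be any finite set and apply the finite-sequence estimate to $a\chi_F$. By Parseval, $\|\widehat{a\chi_F}\|_{L_2(\T^d)}^2=\sum_{k\in F}|a(k)|^2$, so
\[
\Big\|\sum_{k\in F}a(k)\phi_{0k}\Big\|_{H^s}^2\le \|[\h\phi,\h\phi]_s\|_{L_\infty}\sum_{k\in F}|a(k)|^2 .
\]
Since $\sum|a(k)|^2<\infty$, for every $\epsilon>0$ there is a finite $F_0$ such that every finite $F$ disjoint from $F_0$ has tail sum below $\epsilon$. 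This is the Cauchy criterion for unconditional convergence in the Hilbert space $H^s(\R^d)$, so the series converges unconditionally to some $g\in H^s$. Passing to the limit along an exhausting sequence of finite sets, and using continuity of the norm, gives the stated inequality for $g$.

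The only delicate point is the periodization identity for $\|g\|_{H^s}^2$. It needs nonnegativity of the integrand to justify Tonelli, and here the integrand is nonnegative, so the step is routine. I expect no real obstacle. It is also worth noting that the limit $g$ has Fourier transform $\h a\,\h\phi$, since convergence in $H^s$ implies $L_2$-weighted convergence of the Fourier transforms. This identification is not needed for the inequality itself.
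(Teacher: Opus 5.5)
Your proposal is correct and follows the paper's proof essentially step by step. The shared structure is the periodization identity $\|\sum_k a(k)\phi_{0k}\|_{H^s}^2=\int_{\T^d}|\h a|^2[\h\phi,\h\phi]_s$ for finitely supported $a$, followed by the Cauchy criterion for unconditional convergence; you additionally spell out the Parseval step and the passage to the limit for general $a\in\ell_2(\Z^d)$, which the paper leaves implicit.
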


\textbf{Proof.} For any finitely supported sequence 
    $a=\{a(k)\}_{k \in \Z^d}$ we get
		\begin{align*}
			&\norm{\sum\limits_{k \in \Z^d}a(k)\phi_{0k}}^2_{H^s} = \int\limits_{\R^d}(1 + |\xi|^2)^s|\h a(\xi)|^2|\h\phi(\xi)|^2\dd\xi 
			\\
			& \hspace{1cm} =\int\limits_{\T^d}|\h a(\xi)|^2
            \sum\limits_{k \in \Z^d}
            (1 + |\xi + k|^2)^s|\h \phi(\xi + k)|^2\dd\xi \le
            \norm{[\h\phi,\h\phi]_s}_{L_\infty}
			\norm{\h a}^2_{L_2(\T^d)}.
		\end{align*} 
	Unconditional convergence of the series $\sum\limits_{k\in\Z^d}a(k)\phi_{0k}$ follows from the well‑known criterion, which requires the smallness of the norms of finite sums over indices which are far from the origin. This condition holds due to the established inequality. \qed

		\begin{thm}\label{thm1} 
	Let $s\in \R,$
			$\phi \in H^s(\R^d)$, $\w\phi \in H^{-s}(\R^d)$ such that $[\h\phi,\h\phi]_s$ and $[\h{\w\phi}, \h{\w\phi}]_{-s}$ 
            are in $L_\infty(\R^d)$, $f\in H^s(\R^d)$. 
            Then the series 
            $\sum\limits_{k \in \Z^d}\langle f,\w\phi_{0k} \rangle \phi_{0k}$ converges unconditionally in $H^s(\R^d)$ and
			\[
            \norm{\sum\limits_{k \in \Z^d} \langle f, \w\phi_{0k}\rangle\phi_{0k} }_{H^s}^2 \le 
            \norm{[\h{\w\phi},\h{\w\phi}]_{-s}}_{L_\infty} 
            \norm{[\h\phi,\h\phi]_s}_{L_\infty}\norm{f}^2_{H^s}.
			\]
		\end{thm}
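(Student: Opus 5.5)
The plan is to combine Lemma~\ref{lemCoef} with the unnamed lemma stated just before Theorem~\ref{thm1}, which bounds synthesis by $\phi$. The coefficient sequence will be $a(k)=\langle f,\w\phi_{0k}\rangle$.

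First, apply Lemma~\ref{lemCoef} to $f\in H^s(\R^d)$ and $\w\phi\in H^{-s}(\R^d)$; its hypothesis $[\h{\w\phi},\h{\w\phi}]_{-s}\in L_\infty$ is assumed in the theorem. This gives $a\in\ell_2(\Z^d)$. Its symbol is the function $[\h f,\h{\w\phi}]$, up to the sign convention in the exponent: $\h a(\xi)=\sum_k a(k)e^{2\pi i k\xi}$ coincides with the series in Lemma~\ref{lemCoef}. By Parseval on $\T^d$ we then get
\[
\norm{\h a}^2_{L_2(\T^d)}=\sum_{k\in\Z^d}|\langle f,\w\phi_{0k}\rangle|^2\le \norm{[\h{\w\phi},\h{\w\phi}]_{-s}}_{L_\infty}\norm{f}^2_{H^s}.
\]

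Second, apply the synthesis lemma to this $a$, using $\phi\in H^s(\R^d)$ and $[\h\phi,\h\phi]_s\in L_\infty$. It gives unconditional convergence of $\sum_k a(k)\phi_{0k}$ in $H^s(\R^d)$ and the bound $\norm{[\h\phi,\h\phi]_s}_{L_\infty}\norm{\h a}^2_{L_2(\T^d)}$. Inserting the estimate from the first step yields the claimed inequality.

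The only point needing care is the passage from finitely supported sequences to the full $\ell_2$ sequence in the synthesis lemma. Unconditional convergence gives a limit in $H^s$, and the norm bound transfers to it by continuity of the norm, since finite partial sums of $a$ have symbols converging to $\h a$ in $L_2(\T^d)$. Apart from this the proof is a direct concatenation, and I expect no real obstacle.
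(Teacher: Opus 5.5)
Your proposal is correct and is exactly the paper's argument. The paper simply notes that Theorem~\ref{thm1} follows directly from Lemma~\ref{lemCoef}, which gives $\{\langle f,\w\phi_{0k}\rangle\}_{k\in\Z^d}\in\ell_2(\Z^d)$ with the stated bound, combined with the synthesis lemma applied to that coefficient sequence. Your remark on passing from finitely supported sequences to all of $\ell_2(\Z^d)$ by continuity fills in a detail the paper leaves implicit.
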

		This theorem follows directly from the preceding lemmas.

\begin{lem} 
	Let $s\in \R,$
		$\w \phi \in H^{-s}(\R^d)$ such that $[\h {\w\phi}, \h{\w \phi}]_{-s} \in L_{\infty}(\R^d),$
        $f\in H^s(\R^d)$. Then
		\[
			\int\limits_{\T^d} 
			\left| [\h f, \h{\w \phi}]^{\neq\nul}(\xi)\right|^2\dd\xi 
			\le C_{\w\phi}\int\limits_{\R^d\setminus\T^d} |\xi|^{2s} |\h f(\xi)|^2 \dd \xi.
		\]        
\label{lem3}
    \end{lem}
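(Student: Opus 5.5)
We estimate $[\h f,\h{\w\phi}]^{\neq\nul}(\xi)=\sum_{k\neq\nul}\h f(\xi+k)\overline{\h{\w\phi}(\xi+k)}$ pointwise for $\xi\in\T^d$ with the Cauchy--Schwarz inequality, splitting the weight $(1+|\xi+k|^2)^{s}$ between the two factors:
\[
\left|[\h f,\h{\w\phi}]^{\neq\nul}(\xi)\right|^2\le \Big(\sum_{k\neq\nul}|\h{\w\phi}(\xi+k)|^2(1+|\xi+k|^2)^{-s}\Big)\Big(\sum_{k\neq\nul}|\h f(\xi+k)|^2(1+|\xi+k|^2)^{s}\Big).
\]
The first factor is at most $[\h{\w\phi},\h{\w\phi}]_{-s}(\xi)\le \|[\h{\w\phi},\h{\w\phi}]_{-s}\|_{L_\infty}$ for a.e. $\xi$. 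Integrating over $\T^d$ and using the periodization identity $\int_{\T^d}\sum_{k\neq\nul}g(\xi+k)\,d\xi=\int_{\R^d\setminus\T^d}g(\xi)\,d\xi$ (monotone convergence; the translates $\T^d+k$, $k\neq\nul$, tile $\R^d\setminus\T^d$) we get
\[
\int_{\T^d}\left|[\h f,\h{\w\phi}]^{\neq\nul}\right|^2\le \|[\h{\w\phi},\h{\w\phi}]_{-s}\|_{L_\infty}\int_{\R^d\setminus\T^d}(1+|\xi|^2)^s|\h f(\xi)|^2\,d\xi .
\]

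It remains to compare $(1+|\xi|^2)^s$ with $|\xi|^{2s}$ on $\R^d\setminus\T^d$. There $|\xi|\ge 1/2$, hence $1+|\xi|^2\le 5|\xi|^2$ and $1+|\xi|^2\ge|\xi|^2$. So $(1+|\xi|^2)^s\le 5^{s}|\xi|^{2s}$ if $s\ge0$, and $(1+|\xi|^2)^s\le|\xi|^{2s}$ if $s<0$. Thus the lemma holds with $C_{\w\phi}=\max(1,5^s)\,\|[\h{\w\phi},\h{\w\phi}]_{-s}\|_{L_\infty}$.

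The only delicate point is justifying the manipulations a.e. The series defining the bracket converges absolutely a.e. because the Cauchy--Schwarz bound is finite a.e.: $[\h f,\h f]_s\in L_1(\T^d)$ by Remark~\ref{rem1}, and $[\h{\w\phi},\h{\w\phi}]_{-s}$ is bounded by assumption. This also justifies identifying $[\h f,\h{\w\phi}]^{\neq\nul}$ with the absolutely convergent sum over $k\neq\nul$ for a.e. $\xi$. No step here is a real obstacle; the argument is the same as in Lemma~\ref{lemCoef}, with the $k=\nul$ term removed.
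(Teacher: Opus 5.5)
Your proof is correct and matches the paper's argument step for step. You apply Cauchy--Schwarz with the weight $(1+|\xi+k|^2)^{\pm s}$ split between the factors, bound the $\w\phi$-factor by $\|[\h{\w\phi},\h{\w\phi}]_{-s}\|_{L_\infty}$, periodize onto $\R^d\setminus\T^d$, and compare $(1+|\xi|^2)^s$ with $|\xi|^{2s}$ there; your explicit constant $\max(1,5^s)\,\|[\h{\w\phi},\h{\w\phi}]_{-s}\|_{L_\infty}$, which also covers $s<0$, makes the paper's unspecified $C_{\w\phi}$ precise.
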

    
\textbf{Proof.} 
	By the Cauchy inequality we get
	\begin{align*}
		\int\limits_{\T^d} \left|[\h f, \h{\w \phi}]^{\neq\nul}(\xi)\right|^2 & \dd\xi \le
         \int\limits_{\T}\left|\sum\limits_{k \ne \nul} |\h f(\xi + k)|(1 + |\xi + k|^{2})^{\frac{s}{2}} \frac{|\h{\w \phi}(\xi+k)|}{(1 + |\xi + k|^{2})^{\frac{s}{2}}} \right|^2 
        \\
		& \le \int\limits_{\T^d}
        \sum\limits_{k \ne \nul} |\h f(\xi + k)|^2(1 + |\xi + k|^{2})^s
        \sum\limits_{l \ne \nul} \frac{|\h{\w \phi}(\xi+l)|^2}{(1 + |\xi + l|^{2})^{s}} \dd\xi 
        \\
		& \le \left\|[\h{\w\phi}, \h{\w\phi}]_{-s}\right\|_{L_\infty}
        \int\limits_{\R^d\setminus\T^d}(1 + |\xi|^{2})^s|\h f(\xi)|^2\dd \xi \le C_{\w\phi}\int\limits_{\R^d\setminus\T^d} |\xi|^{2s}|\h f(\xi)|^2\dd\xi. \qed
	\end{align*}

\begin{lem}
        Let $s \ge 0,$
		function $\w \phi \in H^{-s}(\R^d)$ has compact support, 
        $f \in \mathcal{S}$, $j \in \mathbb{Z}$. Then 
        $[\h f(M^{*j} \cdot), \h {\w \phi}] \in L_2(\T^d)$. 
        Moreover, 
        for any $N \ge s$ there exists a constant 
        $C_{N,\w\phi,f} > 0$ such that for any $\xi \in \T^d$ and any $j \ge 0$
        \[
        \left| [\h f(M^{*j} \cdot), \h {\w \phi}]^{\neq \nul}(\xi) \right| 
        \le C_{N,\w\phi,f} \, \| M^{*-j} \|^{N}.
        \]
        Also, there exists a constant $C_{\w\phi,f}$ such that
        $\left|[\h f(M^{*j} \cdot), \h {\w \phi}]\right| \le C_{\w\phi,f}$
        on $\T^d$ for any  $j \ge 0$.
        \label{lemGjMain}
        \end{lem}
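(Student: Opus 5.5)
The plan is to exploit two facts: $\h{\w\phi}$ grows at most polynomially, and $\h f(M^{*j}\cdot)$ decays faster than any power away from the origin.

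Since $\w\phi$ is a compactly supported distribution, by the Paley–Wiener–Schwartz theorem $\h{\w\phi}$ is an entire function of at most polynomial growth. Thus there exist $L\ge0$ and $C>0$ with $|\h{\w\phi}(\xi)|\le C(1+|\xi|)^L$ for all $\xi\in\R^d$. (Since $\w\phi\in H^{-s}$ one would expect $L\le s$ to be admissible, but any finite $L$ suffices for the argument.) Since $f\in\mathcal S$, for every $K\ge0$ we have $|\h f(\eta)|\le C_K(1+|\eta|)^{-K}$.

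\textbf{Uniform bound.} The term with $k=\nul$ is bounded by $|\h f(M^{*j}\xi)|\,|\h{\w\phi}(\xi)|\le \|\h f\|_\infty \sup_{\T^d}|\h{\w\phi}|$, uniformly in $j$. It therefore suffices to handle the sum over $k\neq\nul$, and this same sum is also what is needed for the $\neq\nul$ estimate.

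\textbf{Lower bound on the argument.} For $\xi\in\T^d$ and $k\ne\nul$ we have $|\xi+k|\ge c|k|$ with $c>0$, e.g.\ $|\xi+k|\ge |k|-\sqrt d/2\ge |k|/(1+\sqrt d)$ for $|k|\ge1$ after adjusting constants. Moreover $|\xi+k|\ge 1/2$. The key step is the inequality $|M^{*j}(\xi+k)|\ge |\xi+k|/\|M^{*-j}\|$. From it,
\[
|\h f(M^{*j}(\xi+k))|\le C_K\bigl(1+|\xi+k|/\|M^{*-j}\|\bigr)^{-K}\le C_K\|M^{*-j}\|^{K}|\xi+k|^{-K}.
\]

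\textbf{Summation.} Choose $K=N+L+d+1$ and split $\|M^{*-j}\|^{K}=\|M^{*-j}\|^{N}\|M^{*-j}\|^{K-N}$. Since $\|M^{*-j}\|\le C_{M,\theta}\theta^j\le C'$ for $j\ge0$, the second factor is bounded uniformly in $j$. Then
\[
\sum_{k\ne\nul}|\h f(M^{*j}(\xi+k))||\h{\w\phi}(\xi+k)|\le C\|M^{*-j}\|^{N}\sum_{k\neq\nul}(1+|\xi+k|)^{L}|\xi+k|^{-K},
\]
and the last series converges uniformly on $\T^d$ because $K-L>d$. This yields the claimed estimate with $C_{N,\w\phi,f}$ independent of $\xi$ and $j$. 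It also shows that the whole bracket is bounded on $\T^d$ by a constant independent of $j\ge0$, since $\|M^{*-j}\|^N\le C'^N$.

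\textbf{Membership in $L_2(\T^d)$.} For an arbitrary $j\in\Z$ (possibly negative), the same argument applies with $\|M^{*-j}\|$ replaced by a fixed finite constant. The series is then bounded on $\T^d$, hence lies in $L_2(\T^d)$. Alternatively, note that $f(M^{-j}\cdot)\in\mathcal S\subset H^s$ and $[\h{\w\phi},\h{\w\phi}]_{-s}$ is a trigonometric polynomial by Remark~\ref{rem1} (applied to the compactly supported $\w\phi\in H^{-s}$), so Lemma~\ref{lemCoef} gives the claim directly.

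\textbf{Main obstacle.} The only delicate point is the polynomial growth bound for $\h{\w\phi}$; it is exactly what Paley–Wiener–Schwartz supplies for compactly supported distributions. The condition $N\ge s$ plays no essential role here, since any $N\ge0$ works.
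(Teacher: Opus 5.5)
Your proof is correct. Its core mechanism is the same as the paper's: the bound $|M^{*j}\eta|\ge|\eta|/\|M^{*-j}\|$, the rapid decay of $\h f$, and the uniform boundedness of $\|M^{*-j}\|$ for $j\ge0$. You differ in how $\h{\w\phi}$ is controlled.

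The paper never uses pointwise growth of $\h{\w\phi}$. It applies the Cauchy--Schwarz inequality to the sum over $l\neq\nul$, splitting off the weight $(1+|\xi+l|^2)^{\pm s/2}$. That way $\h{\w\phi}$ enters only through $\|[\h{\w\phi},\h{\w\phi}]_{-s}\|_{L_\infty}$, which is finite because this bracket is a trigonometric polynomial (Remark~\ref{rem1}). The leftover factor $\sum_{l\neq\nul}(1+|\xi+l|^2)^{s}|\h f(M^{*j}(\xi+l))|^2$ is then bounded by raising $s$ to $N$, which is the only place $N\ge s$ is used, and extracting powers of $\|M^{*-j}\|$.

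You instead invoke Paley--Wiener--Schwartz to get $|\h{\w\phi}(\xi)|\le C(1+|\xi|)^L$ and sum absolutely. The paper's route stays within the bracket-product machinery it has already built and genuinely uses $\w\phi\in H^{-s}$. Yours imports an external theorem, but it needs only that $\w\phi$ is a compactly supported distribution. It also gives absolute convergence of the series at every $\xi$, not just a.e. Finally, it makes clear that the restriction $N\ge s$ is inessential; this also follows trivially from the case $N=s$, since $\|M^{*-j}\|$ is bounded for $j\ge0$.

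One small correction. Your parenthetical chain $|k|-\sqrt d/2\ge|k|/(1+\sqrt d)$ fails for $d\ge2$ and $|k|=1$ (e.g.\ $d=2$, $k=(1,0)$). For $d\ge4$ the middle quantity can even be nonpositive. The claim $|\xi+k|\ge c|k|$ itself is true. A clean way to get it, for $\xi\in\T^d$ and $k\neq\nul$, is
$|\xi+k|\ge\|\xi+k\|_\infty\ge\|k\|_\infty-\tfrac12\ge\tfrac12\|k\|_\infty\ge|k|/(2\sqrt d)$.
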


\textbf{Proof.} Note that 1-periodic function 
    $[\h f(M^{*j} \cdot), \h {\w \phi}]$ is 
    well-defined and in $L_2(\T^d)$ for any $j\in\Z$ by Lemma~\ref{lemCoef},
    since $[\h {\w\phi}, \h{\w \phi}]_{-s} \in L_{\infty}(\R^d)$ by Remark~\ref{rem1}. Suppose $j \ge 0$. Then by the Cauchy inequality
    \begin{align*}
    \left| [\h f(M^{*j} \cdot), \h {\w \phi}]^{\neq \nul}(\xi) \right|
    &\le
    \sum_{l \neq \nul} (1+|\xi + l|^2)^{\frac s2} |\h f\bigl( M^{*j} (\xi + l) \bigr)|  
    \frac{|\h{\w \phi}(\xi + l)|}{(1+|\xi + l|^2)^{\frac s2}} 
    \\
    \le &\left\|[\h{\w\phi}, \h{\w\phi}]_{-s}\right\|_{L_\infty}
    \sum_{l \neq \nul} (1+|\xi + l|^2)^{N} |\h f\bigl( M^{*j} (\xi + l) \bigr)|^2 
    \\
    \le & \left\|[\h{\w\phi}, \h{\w\phi}]_{-s}\right\|_{L_\infty}
    5^N \|M^{*-j}\|^{N}
    \sum_{l \neq \nul} |M^{*j} (\xi + l)|^{N} |\h f\bigl( M^{*j} (\xi + l) \bigr)|^2,    
    \end{align*}
    since $1+|\xi + l|^2 \le 5 |\xi + l|^2$ for $\xi \in \T^d$ and $l\in\Z^d$, $l\neq\nul$. 
    It remains to note that the last series is uniformly bounded, since $f \in {\mathcal S}.$ The boundedness of 
    $[\h f(M^{*j} \cdot), \h {\w \phi}]$ follows from  the above inequality
    and from the boundedness of $\h f(M^{*j}\cdot)\h{\w\phi}$ on $\T^d$. 
    $\qed$ 
        
\section{Approximation by quasi-projection operators}

    Suppose $s \ge 0,$ functions $\phi \in H^s(\R^d)$ and 
        $\w \phi \in H^{-s}(\R^d)$ have compact supports.
        The main result of this section is to show that high order of approximation for the quasi‑projection operator $Q_j(f)$ on functions from $H^s({\mathbb R}^d)$
		can be ensured under the following conditions
		\begin{align} \label{cond6}
			\left| 1 - \ol{\h{\w \phi} (\xi)}\h \phi (\xi)  \right| \le 
            C_{\phi,\w\phi}|\xi|^n, \quad 
			[\h\phi,\h\phi]_s^{\neq\nul}(\xi) = \sum\limits_{k \ne \nul}(1 + |\xi + k|^{2})^s\left|\h{\phi}(\xi+k)\right|^2 \le C_{\phi}|\xi|^{2n}
		\end{align}
		for all $\xi \in \T^d$ with some $n > 0$.

	\begin{thm}
    \label{thmMain} 
    Let $s \ge 0$, functions
		$\phi \in H^s(\R^d)$ and $\w \phi \in H^{-s}(\R^d)$ 
        have compact supports.
		\newline
		\phantom{}\hspace{0.5cm}1. Then for $f \in H^s(\R^d)$ the series 
        $\sum\limits_{k\in \Z^d} \langle f, \w\phi_{0k} \rangle \phi_{0k}$ 
        converges unconditionally in $H^{s}(\mathbb{R}^d)$.
		\newline
		\phantom{}\hspace{0.5cm}2. If, additionally, for some $n \ge 0$ conditions (\ref{cond6}) are satisfied, then the following inequality holds
		\begin{equation}
				\norm{ f - \sum\limits_{k \in \Z^d} \langle f, \w{\phi}_{0k} \rangle \phi_{0k} }_{{ H^{s}}}^2 
				\le C\left(\norm{|\cdot|^n \h{f}}_{L_2(\mathbb{T}^d)}^2  
				+ \norm{|\cdot|^{s} \h{f}}_{L_2(\mathbb{R}^d\setminus\mathbb{T}^d)}^2\right),
				\label{fThm1}
		\end{equation}   
        where $C$ depends only  on $\phi$, $\w\phi$ and $s$.
	\end{thm}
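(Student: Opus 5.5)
Part 1 follows from Theorem~\ref{thm1}. Since $\phi$ and $\w\phi$ have compact supports, Remark~\ref{rem1} shows that $[\h\phi,\h\phi]_s$ and $[\h{\w\phi},\h{\w\phi}]_{-s}$ are trigonometric polynomials, hence bounded. (For $\w\phi$ a compactly supported distribution in $H^{-s}$, the same reasoning applies: the Fourier coefficients $\langle \w\phi,\w\phi_{0k}\rangle_{H^{-s}}$ vanish for large $|k|$.) So Theorem~\ref{thm1} gives unconditional convergence.

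For Part 2, I will work in the Fourier domain. Put $Qf=\sum_k\langle f,\w\phi_{0k}\rangle\phi_{0k}$. By Lemma~\ref{lemCoef},
$$\h{Qf}(\xi)=[\h f,\h{\w\phi}](\xi)\,\h\phi(\xi),$$
where the first factor is $1$-periodic, up to the sign conventions in $\phi_{0k}$. I split
$$\h f-\h{Qf}=\bigl(1-\ol{\h{\w\phi}}\h\phi\bigr)\h f-[\h f,\h{\w\phi}]^{\neq\nul}\h\phi .$$
Here I use that $[\h f,\h{\w\phi}]=\h f\,\ol{\h{\w\phi}}+[\h f,\h{\w\phi}]^{\neq\nul}$. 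I then estimate $\|f-Qf\|_{H^s}^2$ by integrating $(1+|\xi|^2)^s|\cdot|^2$ separately over $\T^d$ and over $\R^d\setminus\T^d$.

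On $\T^d$ the weight $(1+|\xi|^2)^s$ is bounded. The first term is then controlled by \eqref{cond6}, giving $C\int_{\T^d}|\xi|^{2n}|\h f|^2$.

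On $\R^d\setminus\T^d$ I do not split. Instead I write $\h f-\h{Qf}=\h f-[\h f,\h{\w\phi}]\h\phi$ and bound $\int_{\R^d\setminus\T^d}(1+|\xi|^2)^s|\h f|^2\le C|\cdot|^s$-term directly, since $|\xi|\ge 1/2$ there.

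The remaining pieces are $[\h f,\h{\w\phi}]\h\phi$ on $\R^d\setminus\T^d$ and $[\h f,\h{\w\phi}]^{\neq\nul}\h\phi$ on $\T^d$. Both are handled through periodization: their weighted $L_2$ norms are at most
$$\int_{\T^d}|[\h f,\h{\w\phi}]^{(\neq\nul)}(\xi)|^2\, w(\xi)\,d\xi,$$
where $w=[\h\phi,\h\phi]_s^{\neq\nul}$ or $w=[\h\phi,\h\phi]_s$, as appropriate.

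For the $\neq\nul$ part, $w\le\|[\h\phi,\h\phi]_s\|_\infty$ is bounded. Lemma~\ref{lem3} then gives $C\int_{\R^d\setminus\T^d}|\xi|^{2s}|\h f|^2$.

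The remaining contribution, which I expect to be the main obstacle, is the zeroth term $\h f(\xi)\ol{\h{\w\phi}(\xi)}$ with $\xi\in\T^d$, multiplied by $\h\phi(\xi+k)$ for $k\neq\nul$. Its contribution equals
$$\int_{\T^d}|\h f(\xi)|^2|\h{\w\phi}(\xi)|^2[\h\phi,\h\phi]_s^{\neq\nul}(\xi)\,d\xi\le C\int_{\T^d}|\xi|^{2n}|\h f|^2,$$
by the second condition in \eqref{cond6} and the boundedness of $\h{\w\phi}$, which is entire because $\w\phi$ is compactly supported. To organize this cleanly I will decompose $\R^d=\bigcup_k(\T^d+k)$. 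On $\T^d+k$ with $k\ne\nul$, the error is $\h f(\xi+k)$ minus $[\h f,\h{\w\phi}](\xi)\h\phi(\xi+k)$. I bound it by $2|\h f|^2$ plus $2|\h f(\xi)\ol{\h{\w\phi}(\xi)}|^2|\h\phi(\xi+k)|^2$ plus $2|[\cdot]^{\neq\nul}|^2|\h\phi(\xi+k)|^2$. Summing in $k$ with the weights $(1+|\xi+k|^2)^s$ yields exactly the three controlled quantities above. Combining everything gives \eqref{fThm1}, with $C$ depending on $\phi$, $\w\phi$ and $s$.
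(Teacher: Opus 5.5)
Your proposal is correct and follows the paper's proof essentially step for step. Part~1 goes through Theorem~\ref{thm1} and Remark~\ref{rem1}, and Part~2 uses the same splitting of $\h f-[\h f,\h{\w\phi}]\h\phi$ over $\T^d$ and $\R^d\setminus\T^d$, the same periodization of the part outside $\T^d$ (the paper's $I_4$), Lemma~\ref{lem3} for $[\h f,\h{\w\phi}]^{\neq\nul}$, and the two conditions in~\eqref{cond6} for the remaining terms. One small fix: with three summands your pointwise bound needs constant $3$ (or $2,4,4$ as in the paper) rather than $2,2,2$, which only changes $C$.

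One caveat about your parenthetical. For $s>0$ the coefficients $\langle\w\phi,\w\phi_{0k}\rangle_{H^{-s}}$ need not vanish for large $|k|$, because the weight $(1+|\xi|^2)^{-s}$ makes this inner product nonlocal. The same applies to $\langle\phi,\phi_{0k}\rangle_{H^{s}}$ for $s\notin\N_0$, so the ``trigonometric polynomial'' wording in Remark~\ref{rem1} is too strong. The boundedness of the brackets that you actually use is still true, and you can prove it as follows. Write $\w\phi=\eta\w\phi$ with a smooth compactly supported cutoff $\eta$, so that $\h{\w\phi}=\h\eta*\h{\w\phi}$. Then apply the Cauchy inequality together with Peetre's inequality $(1+|\zeta|^2)^{\tau}\le 2^{|\tau|}(1+|y|^2)^{\tau}(1+|\zeta-y|^2)^{|\tau|}$.
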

		
    \textbf{Proof.} 
    Item 1 holds by Theorem~\ref{thm1} 
        taking into account Remark~\ref{rem1}. Let us prove Item 2.
        By Lemma~\ref{lemCoef}, Lemma~\ref{lem3} and conditions~(\ref{cond6}) 
        the following chain of inequalities hold
        \begin{align*}
		&\norm{ f - \sum\limits_{k \in \mathbb{Z}} \langle f, \w{\phi}_{0k} \rangle \phi_{0k} }_{{ H^{s}}}^2 
		= \int\limits_{\T^d \cup (\R^d \setminus \T^d)}(1+|\xi|^2)^s 
		\left|\h f(\xi) - [\h f, \h{\w\phi}](\xi)\h{\phi}(\xi) \right|^2\dd\xi 
		\\
		\le  2\int\limits_{\T^d} & (1 + |\xi|^2)^s \left(
        \left| 1 - \ol{\h{\w \phi} (\xi)}\h \phi(\xi)\right|^2 \left|\h f(\xi) 
        \right|^2 
		+  \left| [\h f, \h{\w \phi}]^{\neq\nul}(\xi) \h \phi(\xi) \right|^2 \right) \dd \xi
        \\
        & + 2\int\limits_{\R^d \setminus \T^d} (1 + |\xi|^2)^s |\h f(\xi)|^2 \dd \xi
		+ 2\int\limits_{\R^d\setminus\T^d} (1 + |\xi|^2)^s \left| [\h f, \h{\w \phi}](\xi)\h \phi(\xi) \right|^2 \dd \xi 
        \\
        \le 2^{s+1} &  C_{\phi,\w\phi} \int\limits_{\T^d} |\xi|^{2n} |\h f(\xi)|^2\dd \xi + 
		2 (2^s C_{\w\phi} \|\h{\phi}\|_{L_{\infty}(\T^d)}^2 + 5^s) \int\limits_{\R^d\setminus\T^d} |\xi|^{2s}|\h f(\xi)|^2\dd\xi + 2 I_4,
		\end{align*}
		where
		$ I_4 := \int\limits_{\R^d\setminus\T^d} (1 + |\cdot|^2)^s \left| [\h f, \h{\w \phi}]\h \phi \right|^2.$
        To get the required inequality~(\ref{fThm1}) it remains to estimate 
		integral $I_4$ 
        \begin{align*}
		I_4 
		& \leq 2  \sum\limits_{k\ne \nul} \int\limits_{\T^d} (1 + |\xi+k|^2)^s
        \left(\left| \h f(\xi)\ol{\h{\w \phi}(\xi)}\h \phi(\xi + k) \right|^2 
		+ \left|[\h f,\h{\w \phi}]^{\neq\nul}(\xi)\h\phi(\xi + k)\right|^2 
        \right)\dd\xi 
        \\
        & \le 2 \|\h{\w{\phi}}\|_{L_{\infty}(\T^d)}^2 \int\limits_{\T^d} |\h f(\xi)|^2 [\h\phi,\h\phi]_s^{\neq\nul}(\xi) \dd\xi + 2 \norm{[\h\phi,\h\phi]_s}_{L_\infty}
        \int\limits_{\T^d}  \left|[\h f,\h{\w \phi}]^{\neq\nul}(\xi)\right|^2 \dd\xi
		\\
		& \le 2 C_{\phi}  \|\h{\w{\phi}}\|_{L_{\infty}(\T^d)}^2 \int\limits_{\T^d} |\xi|^{2n}|\h f(\xi)|^2\dd\xi + 2 C_{\w\phi} \norm{[\h\phi,\h\phi]_s}_{L_\infty} \int\limits_{\R^d\setminus\T^d}|\xi|^{2s}|\h f(\xi)|^2  \dd\xi,
		\end{align*}
		where the last inequality follows from Lemma~\ref{lem3} and the second inequality in~(\ref{cond6}).
		\qed

\begin{thm}
    Let $s \ge 0$, functions
		$\phi \in H^s(\R^d)$ and $\w \phi \in H^{-s}(\R^d)$ 
        have compact supports. Then the quasi-projection operator 
        $Q_j(f)$ for $j\in\Z$ is well-defined by~(\ref{QjDef}) for functions $f \in H^s(\R^d)$.
        
        If, moreover, for some $n \ge s$ conditions~(\ref{cond6}) are satisfied,
        then for any $f \in H^n({\mathbb R^d})$ and $0 \le r \le s$ the following inequality holds
            \begin{equation}
            \left| f - \smlm_{k\in \Z^d} \langle f, \w \phi_{jk} \rangle \phi_{jk} \right|_{H^r}^2 \leq C \norm{M^{*j}}^{2r}\norm{M^{*-j}}^{2n} |f|_{H^n}^2,
            \label{fMainAppr}
            \end{equation}
        where $C$ does not depend on $j$ and $f$.
    \label{theoMainAppr}
    \end{thm}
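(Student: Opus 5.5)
The plan is to reduce to the case $j=0$ by a dilation argument and then apply Theorem~\ref{thmMain}. For $f\in H^s(\R^d)$ set $g:=f(M^{-j}\cdot)$. Since composition with an invertible linear map preserves $H^s(\R^d)$, we have $g\in H^s(\R^d)$.

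First I check how the operator transforms under this dilation. Change variables $y=M^jx$ in the bilinear form; for a distribution $\w\phi$ this is justified through the Fourier-side definition $\langle f,g\rangle=\int \h f\,\ol{\h g}$ and the rule $\widehat{h(M^j\cdot)}(\xi)=m^{-j}\h h(M^{*-j}\xi)$. This gives
\[
\langle f,\w\phi_{jk}\rangle = m^{-j/2}\langle g,\w\phi_{0k}\rangle ,
\qquad\text{hence}\qquad
\sum_{k}\langle f,\w\phi_{jk}\rangle\phi_{jk}=\Bigl(\sum_k\langle g,\w\phi_{0k}\rangle\phi_{0k}\Bigr)(M^j\cdot).
\]
By Item~1 of Theorem~\ref{thmMain}, the series in parentheses converges unconditionally in $H^s$. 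Dilation is a bounded operator on $H^s$, so $Q_j(f)$ is well defined for every $j\in\Z$.

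For the estimate, take $f\in H^n(\R^d)\subset H^s(\R^d)$ (the inclusion uses $n\ge s$) and put $h:=g-\sum_k\langle g,\w\phi_{0k}\rangle\phi_{0k}$. Then $f-Q_j(f)=h(M^j\cdot)$. The computation will proceed in three steps.
\begin{enumerate}
\item Substituting $\eta=M^{*-j}\xi$,
\[
|h(M^j\cdot)|_{H^r}^2=m^{-2j}\int|\xi|^{2r}|\h h(M^{*-j}\xi)|^2\dd\xi
= m^{-j}\int|M^{*j}\eta|^{2r}|\h h(\eta)|^2\dd\eta\le m^{-j}\|M^{*j}\|^{2r}\|h\|_{H^s}^2 .
\]
The last inequality uses $|\eta|^{2r}\le(1+|\eta|^2)^s$, which holds because $0\le r\le s$.
\item Apply inequality~(\ref{fThm1}) to $g$. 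On $\R^d\setminus\T^d$ we have $|\eta|\ge 1/2$ and $n\ge s$, so $|\eta|^{2s}\le 2^{2(n-s)}|\eta|^{2n}$. Both terms on the right of~(\ref{fThm1}) are therefore bounded by $\int_{\R^d}|\eta|^{2n}|\h g(\eta)|^2\dd\eta$, giving $\|h\|_{H^s}^2\le C'|g|_{H^n}^2$.
\item Since $\h g(\eta)=m^{j}\h f(M^{*j}\eta)$, the substitution $\zeta=M^{*j}\eta$ yields
\[
|g|_{H^n}^2=m^{j}\int|M^{*-j}\zeta|^{2n}|\h f(\zeta)|^2\dd\zeta\le m^j\|M^{*-j}\|^{2n}|f|_{H^n}^2 .
\]
\end{enumerate}
Multiplying the three bounds, the factors $m^{-j}$ and $m^{j}$ cancel, and we obtain~(\ref{fMainAppr}) with $C$ depending only on $\phi,\w\phi,s,n$.

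I expect the computations themselves to be routine. The main point requiring care is the rigorous justification of the dilation identity for the coefficients when $\w\phi$ is only a compactly supported distribution in $H^{-s}$. I would handle this entirely on the Fourier side: write $\langle f,\w\phi_{jk}\rangle=\int\h f\,\ol{\widehat{\w\phi_{jk}}}$, compute $\widehat{\w\phi_{jk}}$ explicitly, and substitute. If needed, first verify the identity for $f\in\mathcal S$ and then extend to $f\in H^s$ by density, using continuity of both sides in $H^s$ (Lemma~\ref{lemCoef} and Theorem~\ref{thm1}).
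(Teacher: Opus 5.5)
Your proposal is correct and follows essentially the same route as the paper: rescale to $j=0$ via $\langle f,\w\phi_{jk}\rangle=m^{-j/2}\langle f(M^{-j}\cdot),\w\phi_{0k}\rangle$, apply Item~2 of Theorem~\ref{thmMain}, and undo the dilation so that the factors $m^{\mp j}$ cancel. The only difference is that the paper applies Theorem~\ref{thmMain} with $s$ replaced by $r$, while you keep the original $s$ and use $|\eta|^{2r}\le(1+|\eta|^2)^s$. Your version is slightly more careful, because Theorem~\ref{thmMain} with $s=r$ would formally require $\w\phi\in H^{-r}$, and this does not follow from $\w\phi\in H^{-s}$ when $r<s$.
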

    
\textbf{Proof.} Note that $\langle f, \w \phi_{jk} \rangle = m^{-j/2}\langle f(M^{-j}\cdot), \w\phi_{0k}\rangle$. Then by Item 1 in Theorem~\ref{thmMain}  $Q_j(f)$ is well-defined for $f \in H^s(\R^d)$ and any $j\in\Z$.

Next, using the properties of the Fourier transform and by changing variables we get
    \begin{align*}
    &\left|f - \smlm_{k\in \Z^d} 
        \langle f, \w \phi_{jk} \rangle \phi_{jk}\right|_{H^r}^2 
        = \int\limits_{\R^d} |\xi|^{2r}
    \left|\h f(\xi) - \smlm_{k \in \Z^d}m^{-j}\langle f(M^{-j}\cdot), \w\phi_{0k}\rangle \h{\phi_{0k}}(M^{*-j}\xi) \right|^2\dd \xi 
    \\
    & \hspace{1cm} =  m^{j}\int\limits_{\R^d} |M^{*j}\xi|^{2r} 
    \left| m^{-j} \h{f(M^{-j}\cdot)} (\xi) - \smlm_{k \in \Z^d} m^{-j}\langle f(M^{-j}\cdot), \w\phi_{0k}\rangle \h{\phi_{0k}}(\xi) \right|^2 \dd \xi
    \\
     & \hspace{1cm} \le  m^{-j}\norm{M^{*j}}^{2r} \left|
     f(M^{-j}\cdot)-\smlm_{k \in \Z^d}\langle f(M^{-j}\cdot), \w\phi_{0k} \rangle \phi_{0k} \right|^2_{H^r}.
    \end{align*}
    Hence, applying Item 2 of Theorem \ref{thmMain} (with $s=r$) and taking into account that $n \ge r$, we get
    \begin{align*}
    & \left|f - \smlm_{k\in \Z^d} 
       \langle f, \w \phi_{jk} \rangle \phi_{jk}\right|_{H^r}^2  \le
    C m^{-j}\norm{M^{*j}}^{2r} \int\limits_{\R^d}|\xi|^{2n}
    \left| \h{f(M^{-j} \cdot)}(\xi) \right|^2 \dd\xi =
    \\
    &  \hspace{1cm} =  C \norm{M^{*j}}^{2r}  \int\limits_{\R^d} |M^{*-j}\xi|^{2n}\left| \h{f}(\xi) \right|^2 \dd\xi \le 
    C \norm{M^{*j}}^{2r}\norm{M^{*-j}}^{2n} |f|_{H^n}^2.
    \qed
    \end{align*}

    It $M$ is an isotropic dilation matrix, then under the assumptions of Theorem~\ref{theoMainAppr} inequality~(\ref{fMainAppr}) can be written as
    \[
    \left| f - Q_j(f) \right|_{H^r} \leq C \ C_2^{n+r} \rho^{-j(n-r)} |f|_{H^n},
    \]
    where $\rho = \rho(M)$ and $C_2$ is from inequality~(\ref{fIsotropicM}). 
    Since $0 \le r \le s$, from this inequality we can deduce that 
    $$
    \left\| f - Q_j(f) \right\|_{H^s} \le C \rho^{-j(n-s)} |f|_{H^n}.
    $$
    If $s\in \N$, then the quasi-projection operator $Q_j(f)$ provides approximation order not lower than $n-s$ 
    for $f\in H^n({\mathbb R^d})$ and its derivatives up to order $s$. 
    Somewhat better description of the above considerations can be expressed in terms of
    \textit{simultaneous approximation order}. This notion was introduced in~\cite{Zhao}
    for studying the approximation properties of
    shift-invariant spaces in the Sobolev spaces. 
    In the context of approximation by the quasi-projection operators this notion 
    can be paraphrased.
    
    \begin{defi} 
    Let $s \in \N_0$ and $n > s$,
    $\phi \in H^s(\R^d)$, $\w \phi \in H^{-s}(\R^d)$, $n > s$ and
    the quasi-projection operator $Q_j(f)$ is well-defined by~(\ref{QjDef}) for functions $f \in H^s(\R^d)$. We say that $Q_j$ provides 
    simultaneous approximation order $(s,n)$ in $H^s(\R^d)$, if
    for all functions $f \in H^n(\R^d)$
    \begin{equation}
        \sum_{r=0}^{s} \norm{M^{*j}}^{-r}
        \left| f - \sum\limits_{k\in\Z^d}\langle f, \w\phi_{jk}\rangle\phi_{jk} \right|_{H^{r}} \le
        C \norm{M^{*-j}}^{n} |f|_{H^n},
        \label{fApprox}  
    \end{equation}
    where $C$ does not depend on $j$ and $f$.
    \end{defi}


Theorem~\ref{theoMainAppr} states that conditions~(\ref{cond6}) 
    are sufficient for simultaneous approximation order $(s, n)$ of $Q_j$ in $H^s(\R^d)$.
    Moreover, these conditions are also necessary.

\begin{thm}
    Let $s \in \N_0$,
    functions $\phi \in H^s(\R^d)$, $\w \phi \in H^{-s}(\R^d)$ have compact supports.
        Then the quasi-projection operator $Q_j$ provides simultaneous approximation order $(s,n)$ in $H^s(\R)$ for some $n>s$
        if and only if conditions~(\ref{cond6}) are satisfied.
    \label{fQjApprox}
\end{thm}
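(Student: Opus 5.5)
The sufficiency direction should follow from Theorem~\ref{theoMainAppr}, and for necessity it should be enough to test the inequality~(\ref{fApprox}) at the single level $j=0$ on functions whose Fourier transforms are supported inside the open cube $(-\frac12,\frac12)^d$.

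\emph{Sufficiency.} Assume~(\ref{cond6}) holds for some $n>s$. Theorem~\ref{theoMainAppr}, applied for every $r=0,1,\dots,s$, gives
\[
\norm{M^{*j}}^{-r}\left|f-Q_j(f)\right|_{H^r}\le C^{1/2}\norm{M^{*-j}}^{n}|f|_{H^n}.
\]
Summing these $s+1$ inequalities yields~(\ref{fApprox}). The identity $\norm{M^{*j}}=\norm{M^j}$ is not even needed here, since~(\ref{fApprox}) is stated with $M^{*j}$.

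\emph{Necessity.} Assume~(\ref{fApprox}). Put $j=0$ and keep only the terms $r=0$ and $r=s$:
\[
\|f-Q_0 f\|_{L_2}+|f-Q_0 f|_{H^s}\le C|f|_{H^n}, \qquad f\in H^n(\R^d).
\]
Take $f\in\mathcal S$ with $\operatorname{supp}\h f$ a compact subset of the open cube $(-\frac12,\frac12)^d$. For $\xi\in\T^d$ and $k\neq\nul$ we then have $\h f(\xi+k)=0$. Hence, by Lemma~\ref{lemCoef}, the bracket reduces to $[\h f,\h{\w\phi}](\xi)=\h f(\xi)\ol{\h{\w\phi}(\xi)}$ on $\T^d$. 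Consequently
\[
\h{Q_0f}(\xi+k)=\h f(\xi)\ol{\h{\w\phi}(\xi)}\h\phi(\xi+k), \qquad \xi\in\T^d,\ k\in\Z^d .
\]
Splitting $\R^d=\bigcup_k(\T^d+k)$ gives
\[
\|f-Q_0f\|_{L_2}^2=\int_{\T^d}|\h f(\xi)|^2\Bigl(\bigl|1-\ol{\h{\w\phi}(\xi)}\h\phi(\xi)\bigr|^2+|\h{\w\phi}(\xi)|^2[\h\phi,\h\phi]_0^{\neq\nul}(\xi)\Bigr)\dd\xi .
\]
The seminorm $|f-Q_0f|_{H^s}^2$ has the same form, with the weight $|\xi|^{2s}$ in the first term and $|\xi+k|^{2s}$ in the second. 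Both are bounded by $C\int_{\T^d}|\xi|^{2n}|\h f(\xi)|^2\dd\xi$.

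Since $|\h f|^2$ may be any smooth bump inside the open cube, Lebesgue differentiation gives pointwise inequalities a.e. on the open cube. Because $\phi$ and $\w\phi$ have compact support, $\h\phi$ and $\h{\w\phi}$ are smooth (analytic), so continuity extends these to everywhere in the open cube and then to all of $\T^d$. The inequalities are:
\begin{itemize}
\item $|1-\ol{\h{\w\phi}}\h\phi|^2\le C|\xi|^{2n}$ on $\T^d$, which is the first condition in~(\ref{cond6});
\item $|\h{\w\phi}(\xi)|^2\sum_{k\ne\nul}(1+|\xi+k|^{2s})|\h\phi(\xi+k)|^2\le C|\xi|^{2n}$.
\end{itemize}
Since $(1+|\xi+k|^2)^s\le 2^{s}(1+|\xi+k|^{2s})$, the second inequality controls $|\h{\w\phi}|^2[\h\phi,\h\phi]_s^{\neq\nul}$.

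\emph{Main obstacle: the extra factor $|\h{\w\phi}(\xi)|^2$.} It has to be removed to reach the second condition in~(\ref{cond6}). Because $n>0$, the first condition gives $\ol{\h{\w\phi}(\nul)}\h\phi(\nul)=1$. So by continuity $|\h{\w\phi}|\ge c>0$ on some ball $B_\delta(\nul)$, and there we may divide by it.

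On $\T^d\setminus B_\delta(\nul)$ we use Remark~\ref{rem1} instead: $[\h\phi,\h\phi]_s$ is a trigonometric polynomial, hence bounded. There $|\xi|^{2n}\ge\delta^{2n}$, so $[\h\phi,\h\phi]_s^{\neq\nul}(\xi)\le\delta^{-2n}\|[\h\phi,\h\phi]_s\|_{L_\infty}|\xi|^{2n}$. Combining the two regions yields the second condition in~(\ref{cond6}) with a single constant, and this completes the necessity part.
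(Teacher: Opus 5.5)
Your proof is correct and follows essentially the same route as the paper: sufficiency from Theorem~\ref{theoMainAppr}, and necessity by testing (\ref{fApprox}) at $j=0$ on functions with $\h f$ supported in the cube. From there you extract pointwise bounds, then remove the factor $|\h{\w\phi}|^2$ near the origin using $\h{\w\phi}(\nul)\neq 0$ and away from it using boundedness of $[\h\phi,\h\phi]_s$. The only difference is cosmetic: you get the ``$1$'' in $(1+|\xi+k|^2)^s$ from the $r=0$ term, whereas the paper uses $|\xi+k|\ge \frac12$ for $\xi\in\T^d$, $k\neq\nul$, to bound $(1+|\xi+k|^2)^s$ by a multiple of $|\xi+k|^{2s}$ directly.
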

\textbf{Proof.} 
Sufficiency follows from Theorem~\ref{theoMainAppr}. Let us prove the necessity. 
Inequality~(\ref{fApprox})
with $j=0$ and $r=0$ yields that 
\[
\left|f - \sum\limits_{k \in \Z^d} \langle f, \w 	\phi_{0k} \rangle \phi_{0k} \right|_{H^0(\T^d)}^2  
\le 
\left|f - \sum\limits_{k \in \Z^d} \langle f, \w 	\phi_{0k} \rangle \phi_{0k} \right|_{H^0}^2 
        \leq C |f|^2_{H^n}. 
\]
Hence, for a function $f\in H^n(\R^d)$ with $\texttt{supp}~\h f \subset \T^d$ the following inequality holds
\begin{align*}
   \left|f - \sum\limits_{k \in \Z^d} \langle f, \w 	\phi_{0k} \rangle \phi_{0k} \right|_{H^0(\T^d)}^2  & = \int\limits_{\T^d} 
   \left| \h f(\xi) -  [\h f, \h{\w\phi}](\xi) \h \phi(\xi)\right|^2 \dd \xi  
   \\
   &=\int\limits_{\T^d} 
   |\h{f}(\xi)|^{2}
   \left|1 - \h \phi(\xi) \overline{\h{\w \phi}(\xi)}\right|^2\dd \xi 
   \leq C \int\limits_{\T^d} |\xi|^{2n} |\h{f}(\xi)| ^2 \dd\xi.
\end{align*}
Therefore,
    \begin{align}
        0 \leq \int\limits_{\T^d} 
        |\h f(\xi)|^2 \left(C |\xi|^{2n} - |1 - \h \phi(\xi)\overline{\h{\w \phi}(\xi)}|^2 \right) \dd\xi,
        \label{fNecessity1}
    \end{align}
which implies that $|1 - \h \phi(\xi) \overline{\h{\w \phi}(\xi)}|^2 \leq C |\xi|^{2n}$ for $\xi \in \T^d$. Indeed, let us assume the opposite. 
Suppose there exists a point $\xi_0\in \T^d$ such that
$C |\xi_0|^{2n} - |1 - \h \phi(\xi_0) \overline{\h{\w\phi}(\xi_0)}|^2 < 0$. By the continuity of $\h\phi$ and $\h{\w\phi}$ the last inequality holds in some neighborhood $U \subset \T^d$ of 
$\xi_0$. If $f$ is in $H^n(\R^d)$ with $\texttt{supp}~\h f \subset U$, then the integral in~(\ref{fNecessity1}) is negative, which gives a contradiction.

Similarly, we can prove the second inequality in~(\ref{cond6}). For a function $f\in H^n(\R^d)$ with $\texttt{supp}~\h f \subset \T^d$ the following inequality holds
    \begin{align*}
 			\left| f - \sum\limits_{k \in \Z^d}
            \langle f, \w\phi_{0k} \rangle\phi_{0k} \right|_{H^s}^2 
            & \geq 
            \left|\sum\limits_{k \in \Z^d}
            \langle f, \w\phi_{0k} \rangle\phi_{0k}
            \right|_{H^s(\R^d\setminus\T^d)}^2 
            \\
            & = \int\limits_{\T^d} \sum\limits_{k \ne \nul}  |\xi + k|^{2s} \left|
            \h\phi(\xi + k) [\h f, \h{\w\phi}](\xi) \right|^2\dd\xi
            \\
 			&\geq C' \int\limits_{\T^d} \sum\limits_{k \ne \nul} (1 + |\xi + k|^2)^s \left|
            \h\phi(\xi + k) [\h f, \h{\w\phi}](\xi) \right|^2\dd\xi
 			\\
            &= C'\int\limits_{\T^d} [\h\phi, \h\phi]^{\neq\nul}_s(\xi) 
            \left|\h f(\xi) \h{\w \phi}(\xi)\right|^2 \dd\xi.
 	\end{align*}
Together with inequality~(\ref{fApprox})
for $j=0$ this leads to
 		\[
 			C'\int\limits_{\T^d} [\h\phi, \h\phi]^{\neq\nul}_s(\xi) 
            \left|\h f(\xi) \h{\w \phi}(\xi)\right|^2 \dd\xi  
 			\leq C \int\limits_{\T^d}  |\xi|^{2n} |\h f(\xi)|^2\dd\xi
            \]
or, equivalently,       
 			\[
            0 \leq \int\limits_{\T^d} |\h f(\xi)|^2 \left(
            C |\xi|^{2n} - C'|\h{\w\phi}(\xi)|^2[\h \phi, \h \phi]^{\neq\nul}_s(\xi) \right)
            \dd\xi.
 		\]
Again, by contradiction we can show that the last inequality implies 
    \[
 			\sum\limits_{k \ne \nul}(1 + |\xi+k|^2)^s |\h\phi(\xi + k)|^2 |\h{\w\phi}(\xi )|^2 \leq \w C |\xi|^{2n}\quad \forall \xi \in \T^d.
 		\]
At the same time, the first inequality in~(\ref{cond6}) yields that $\h{\w\phi}(0) \neq 0$. And, hence, in some small ball $B_{\delta}(\nul)$ with a radius $\delta>0$ 
the function $\h{\w\phi}$ is bounded away from zero. Thus, for all $\xi \in B_{\delta}(\nul)$
\[
    \sum\limits_{k \ne \nul}(1 + |\xi+k|^2)^s |\h\phi(\xi + k)|^2 |\h{\w\phi}(\xi )|^2 \geq C'' \sum\limits_{k \ne \nul}(1 + |\xi+k|^2)^s |\h\phi(\xi + k)|^2
\]
and, therefore,
\[
    \frac{\w C}{C''}|\xi|^{2n} \geq \sum\limits_{k \ne \nul}(1 + |\xi+k|^2)^s |\h\phi(\xi + k)|^2\quad \forall \xi \in B_{\delta}(\nul).
\]
For $\xi \in \T^d \setminus B_{\delta}(\nul)$ we always can provide that
\[
\sum\limits_{k \ne \nul}(1 + |\xi+k|^2)^s |\h\phi(\xi + k)|^2 |\h{\w\phi}(\xi )|^2 \le C_{\phi,\w\phi} \le C_{\phi,\w\phi} \frac{|\xi|^n}{\delta^n}. \qed
\]

Closely related to simultaneous approximation order is the notion of 
simultaneous density order, which was introduced in~\cite{SanAntolin2025} (see also~\cite{SanAntolin2026}) in the context
of shift-invariant spaces in the Sobolev spaces. 
Motivated by this notion and by~\cite{SanAntolinL2}, it can be reformulated
in order to characterize quasi-projection operators.

\begin{defi}
    Let $s \in \N_0$,
    $\phi \in H^s(\R^d)$, $\w \phi \in H^{-s}(\R^d)$, $n\ge s$ and
    the quasi-projection operator $Q_j(f)$ is well-defined by~(\ref{QjDef}) for functions $f \in H^s(\R)$. We say that $Q_j$ provides 
    simultaneous density order $(s,n)$ in $H^s(\R^d)$, if
    for all functions $f \in H^n(\R^d)$
    \begin{equation}
        \lim_{j\to +\infty}\norm{M^{*-j}}^{-n} \sum_{r=0}^{s} \norm{M^{*j}}^{-r}
        \left| f - \sum\limits_{k\in\Z^d}\langle f, \w\phi_{jk}\rangle\phi_{jk} \right|_{H^{r}}  = 0.
        \label{fSimult}  
    \end{equation}
    \end{defi}

Note that simultaneous density order $(s,n)$ for $Q_f$ implies 
simultaneous approximation order $(s,n)$. In the statements below we establish sufficient and
necessary (for isotropic dilation matrices) conditions to the fact that $Q_j$ provides 
simultaneous density order $(s,n)$.

\begin{thm}
     Let $s \in \N_0$, $\phi \in H^s(\R^d)$ and 
     $\w \phi \in H^{-s}(\R^d)$
     have compact supports, $n\ge s.$ 

    1. Suppose that conditions 
        \begin{equation}
        \lim\limits_{t\rightarrow \nul} \frac{1}{|t|^{n}}\left|1-
		\h \phi(t) \ol{\h {\w \phi}(t)} \right|
         = 0,
         \quad
         \lim\limits_{t \rightarrow \nul} \frac{1}{|t|^{2n}} [\h\phi, \h\phi]_r^{\ne 0}(t) = 0
         \label{fSDO}
        \end{equation}
        hold for all $0 \leq r \leq s$.
    Then the quasi-projection operator $Q_j$ provides simultaneous density order $(s,n)$ in $\mathcal S$.

    2. Suppose that $M$ is an isotropic dilation matrix. The quasi-projection operator 
    $Q_j$ provides  simultaneous density order $(s,n)$ in $\mathcal S$ if and only if conditions~(\ref{fSDO}) hold.
    \label{density}
\end{thm}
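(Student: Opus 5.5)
For Item 1, I fix $f\in\mathcal S$ and $0\le r\le s$ and redo the rescaling from the proof of Theorem~\ref{theoMainAppr}. Set $g_j=f(M^{-j}\cdot)$, so $\h{g_j}(\xi)=m^{j}\h f(M^{*j}\xi)$. Then
\[
\norm{M^{*j}}^{-2r}\left|f-Q_j(f)\right|_{H^r}^2\le m^{-j}\left|g_j-\sum_k\langle g_j,\w\phi_{0k}\rangle\phi_{0k}\right|_{H^r}^2 .
\]
I then expand the right-hand side exactly as in the proof of Theorem~\ref{thmMain}, now writing $\h{g_j}$ in terms of $\h f(M^{*j}\cdot)$. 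Four terms appear.

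\textbf{Term (a).} This is $\int_{\T^d}|\xi|^{2r}|1-\h\phi\ol{\h{\w\phi}}|^2 m^{j}|\h f(M^{*j}\xi)|^2\,d\xi$. I write $|1-\h\phi\ol{\h{\w\phi}}|^2=\epsilon(\xi)|\xi|^{2n}$ with $\epsilon(\xi)\to0$ as $\xi\to\nul$ and $\epsilon$ bounded on $\T^d$. Changing variables $\eta=M^{*j}\xi$ and using $|\xi|\le\norm{M^{*-j}}|\eta|$ bounds (a) by $\norm{M^{*-j}}^{2n}\int\epsilon(M^{*-j}\eta)|\eta|^{2n}|\h f(\eta)|^2\,d\eta$. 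Here I use $r\le n$ and $|\xi|\le 1$ on $\T^d$. Dominated convergence then gives $o(\norm{M^{*-j}}^{2n})$.

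\textbf{Term (b).} The term with $\h f\,\ol{\h{\w\phi}}\,\h\phi(\cdot+k)$, $k\ne\nul$, is handled the same way, using the second condition in~(\ref{fSDO}) in place of the first.

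\textbf{Terms (c) and (d).} These involve $\h{g_j}$ outside $\T^d$ and $[\h f(M^{*j}\cdot),\h{\w\phi}]^{\neq\nul}$. Since $f\in\mathcal S$, both are $O(m^{-j}\cdot m^{j}\norm{M^{*-j}}^{N})$ for any $N$, by Lemma~\ref{lemGjMain} and the rapid decay of $\h f(M^{*j}\xi)$ for $|\xi|\ge 1/2$. To see this is $o(\norm{M^{*-j}}^{2n})$ I need a lower bound $\norm{M^{*-j}}\ge c\,\rho(M)^{-j}$, while $\norm{M^{*-j}}^{N}\le C\theta^{jN}$ with $\theta>\rho(M^{-1})$ arbitrary. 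Taking $N$ large enough that $\theta^{N}<\rho(M)^{-2n}$ finishes Item 1. I expect this comparison of norms for non-isotropic $M$ to be the only delicate bookkeeping point.

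For Item 2, sufficiency is Item 1. For necessity I take $M$ isotropic, so $\norm{M^{*\pm j}}\asymp\rho^{\pm j}$ by~(\ref{fIsotropicM}). Arguing by contradiction, suppose the first limit in~(\ref{fSDO}) fails. Then there are points $t_\nu\to\nul$ with $|1-\h\phi\ol{\h{\w\phi}}|(t_\nu)\ge c|t_\nu|^n$. The plan is to build a single $f\in\mathcal S$ that violates~(\ref{fSimult}) with $r=0$, using the lower bound
\[
|f-Q_j f|_{H^0}^2\ge\int_{\T^d}|1-\h\phi\ol{\h{\w\phi}}|^2\, m^{j}|\h f(M^{*j}\xi)|^2\,d\xi
\]
minus the terms (c) and (d), which are negligible. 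I compare this with $\rho^{-2jn}$. After the substitution the integrand becomes $|1-\cdot|^2(M^{*-j}\eta)\,|\h f(\eta)|^2$, and I need it to be at least $c\rho^{-2jn}$ along a subsequence of $j$. Pick $j_\nu$ with $\eta_\nu=M^{*j_\nu}t_\nu$ in a fixed compact annulus; isotropy makes this possible since $|M^{*j}t|\asymp\rho^j|t|$. Pass to a subsequence so that $\eta_\nu\to\eta_0\ne\nul$, and choose $f$ with $\h f>0$ on a neighbourhood of the annulus. Continuity of $\h\phi$, $\h{\w\phi}$ then yields a fixed-size neighbourhood, scaled by $M^{*-j_\nu}$, on which the lower bound holds. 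This contradicts the limit being $0$.

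The second condition is handled analogously with the term $|\cdot|_{H^r(\R^d\setminus M^{*j}\T^d)}$, as in the necessity part of Theorem~\ref{fQjApprox}. There I use $\h{\w\phi}(\nul)\ne0$, which follows from the first condition, to divide out $|\h{\w\phi}|^2$ near $\nul$.

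The main obstacle is this necessity construction. A single Schwartz function must capture the failure along a whole sequence of scales, and the neighbourhood of each $t_\nu$ on which the lower bound persists must not shrink faster than $|t_\nu|$. If it does shrink, I would switch to uniform continuity of the ratio on annuli $\{c_1\rho^{-j}\le|\xi|\le c_2\rho^{-j}\}$. Failing that, I would argue via $\limsup$ with the fixed $\h f$ bounded below on a fixed annulus.
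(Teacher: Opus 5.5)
Your Item 1 is correct and is the paper's argument after the change of variables $\xi\mapsto M^{*j}\xi$. The main terms on $\T^d$ are handled by dominated convergence from (\ref{fSDO}), and the aliasing terms by Lemma~\ref{lemGjMain} and the decay of $\h f$. Using $|a+b|^2\le 2|a|^2+2|b|^2$ instead of the paper's exact expansion is enough for an upper bound. The norm comparison you flag is not needed. Lemma~\ref{lemGjMain} and the decay of $\h f$ give $\norm{M^{*-j}}^{N}$ for \emph{every} $N$, so the factor $\norm{M^{*-j}}^{-2n}$ is absorbed by enlarging $N$, exactly as in (\ref{fIneq0}). (The prefactor of those terms is $m^{j}$, not $m^{-j}m^{j}$; this is harmless by (\ref{fMjEstimation2}).)

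The genuine gap is in the necessity part of Item 2, at the sentence ``continuity of $\h\phi$, $\h{\w\phi}$ then yields a fixed-size neighbourhood, scaled by $M^{*-j_\nu}$''. Write $h:=1-\h\phi\ol{\h{\w\phi}}$.
\begin{itemize}
\item Continuity gives a neighbourhood of $t_\nu$ of \emph{some} radius, unrelated to $|t_\nu|\asymp\rho^{-j_\nu}$. Even a Lipschitz bound only gives radius of order $|t_\nu|^{n}$, which is $o(|t_\nu|)$ for $n>1$.
\item Suppose the set where $|h(t)|\ge c|t|^n$ were just balls of radius $o(|t_\nu|)$ around the $t_\nu$. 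Then their images under $M^{*j_\nu}$ have measure tending to $0$, the integral tends to $0$, and no contradiction arises.
\item Your fallbacks name the right target, equicontinuity of $\eta\mapsto\rho^{jn}h(M^{*-j}\eta)$ on a fixed annulus, but give no reason why it holds.
\end{itemize}

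The missing idea is the smoothness of $h$ at $\nul$; recall $\h\phi$ and $\h{\w\phi}$ are restrictions of entire functions.
\begin{itemize}
\item If $h\neq o(|t|^n)$, the first nonvanishing homogeneous Taylor term $P_k$ of $h$ at $\nul$ has $k\le n$.
\item Hence $|h(t)|\ge c|t|^k\ge c|t|^n$ on $\Gamma\cap B_\delta(\nul)$, where $\Gamma=\{t\neq\nul: |P_k(t/|t|)|>2c\}$ is a nonempty open cone.
\item The cone is dilation invariant, and this supplies the scale invariance you need. Choose a unit vector $\omega_0$ with $B_{\epsilon_0}(\omega_0)\subset\Gamma$, $\epsilon_0<1$. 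By (\ref{fIsotropicM}), for every large $j$ the ball of fixed radius $\epsilon_0/C_2$ around $M^{*j}(\rho^{-j}\omega_0)$ lies in a fixed annulus not containing $\nul$. It is mapped by $M^{*-j}$ into $\rho^{-j}B_{\epsilon_0}(\omega_0)\subset\Gamma\cap B_\delta(\nul)$.
\item Take, e.g., a Gaussian $f$. Then the $r=0$ main term is bounded below by a positive constant for all large $j$, which gives the contradiction.
\item The second condition in (\ref{fSDO}) is handled the same way. Indeed, $[\h\phi,\h\phi]_r^{\neq\nul}$ is a trigonometric polynomial (Remark~\ref{rem1}, $r\in\N_0$) minus $(1+|\cdot|^2)^r|\h\phi|^2$, hence smooth near $\nul$.
\end{itemize}

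You cannot borrow this step from the paper either. Its necessity argument fixes one neighbourhood $U\not\ni\nul$ of a single point $t$. But $M^{*j}U$ leaves every compact set as $j\to+\infty$, so $\int_{M^{*j}U}|\h f|^2|\xi|^{2n}\,d\xi\to 0$, and the claimed strictly positive limit does not follow.
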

\noindent\textbf{Proof.}
Let us fix $j\in \Z$ and denote  
    \[
    G_j(\xi):=\sum\limits_{l \in \Z^d}\h f(M^{*j}(\xi + l))\ol {\h{\w \phi}(\xi + l)} = [\h f(M^{*j} \cdot), \h {\w \phi}](\xi),\quad \xi\in \R^d. 
    \]
	By Lemma~\ref{lemGjMain} function $G_j$ is bounded on ${\mathbb T}^d$
    and using Remark~\ref{rem1} it can be checked that
    $\langle f, \w{\phi}_{jk}\rangle=m^{\frac{j}{2}}\h G_j(k)$, 
	$k \in\Z^d$. Hence,
    $$
    \left(\smlm_{k \in \Z^d} \langle f, \w\phi_{jk}\rangle{\phi_{jk}} \right)^{\wedge} = 
    \sum\limits_{k \in \Z^d}\h G_j(k)e^{2\pi i(k,M^{*-j}\xi)}\h \phi(M^{-*j}\xi) = G_j(M^{*-j}\xi)\h \phi(M^{*-j}\xi).
    $$
    Also, it is convenient to use notation
    $G_j^{\neq\nul}(\xi) := G_j(\xi) - \h f(M^{*j}\xi) \ol{\h{\w\phi}(\xi)}.$
    
	Item 1. Consider one term in the left-hand side of~(\ref{fSimult})
	\begin{align*}
		\frac{\norm{M^{*j}}^{-2r}}{\norm{M^{*-j}}^{2n}} 
        &\left| f-\smlm_{k \in \Z^d} \langle f, \w\phi_{jk}\rangle{\phi_{jk}} \right|_{H_r}^2 
        \\
        & =\frac{\norm{M^{*j}}^{-2r}}{\norm{M^{*-j}}^{2n}} \int\limits_{\R^d}|\xi|^{2r}\left|\h f(\xi)-
        G_j(M^{*-j}\xi)\h \phi(M^{*-j}\xi)\right|^2\dd \xi :=I_1 + I_2.
	\end{align*}  
	Here, the last integral is represented as a sum of integral $I_1$ over $M^{*j}\T^d$ and integral $I_2$ over $\R^d \setminus M^{*j}\T^d$. 
    
    1. Consider integral $I_1$, taking into account that
    $G_j = \h f(M^{*j}\cdot) \ol{\h{\w\phi}} + G_j^{\neq\nul}$.
    Then
    \begin{align*}
    I_1 & = \int\limits_{M^{*j}\T^d}
    \norm{M^{*j}}^{-2r}
    |\xi|^{2r}
    \left|
    \frac{\h f(\xi) \left(1-\h\phi \ol{\h{\w\phi}} \right)(M^{*-j}\xi)}{\norm{M^{*-j}}^{n}}
    - \frac{G^{\neq \nul}_j(M^{*-j}\xi)  \h \phi(M^{*-j}\xi)}{\norm{M^{*-j}}^{n}} \right|^2 \dd \xi
    \\
    & = \int\limits_{\R^d}
    \norm{M^{*j}}^{-2r}
    |\xi|^{2r}
    \left|\frac{\h f(\xi) \left(1-\h\phi \ol{\h{\w\phi}} \right)(M^{*-j}\xi)}{\norm{M^{*-j}}^{n}}\right|^2 \chi_{M^{*j}\T^d}(\xi)  \dd \xi + o(1)
    \quad \text{as} \ j \to + \infty.
    \end{align*}  
    This can be shown by expanding the brackets of the squared module 
    according to the template 
    $|a-b|^2 = |a|^2 - 2 {\mathcal Re} (a b) + |b|^2$.
    First, integral over the 3rd term in this template tends to zero as $j \to + \infty$.
    Indeed, note that
    $\h \phi(M^{*-j}\cdot)$ is bounded on $M^{*j}\T^d$,
    $\norm{M^{*j}}^{-2r} |\xi|^{2r} \le |M^{*-j}\xi|^{2r} \le 1$ for
    $\xi \in M^{*j}\T^d$ and by Lemma~\ref{lemGjMain} we can choose $N > 0$ such that relations
    $\frac{G^{\neq \nul}_j(M^{*-j}\xi) }
    {\norm{M^{*-j}}^{n} } \le C  \|M^{*-j}\|^N$ and~(\ref{fMjEstimation2}) are valid.
    Hence, as $j \to + \infty$
    \begin{equation}
    \int\limits_{\R^d}
    \norm{M^{*j}}^{-2r}
    |\xi|^{2r}
    \left|
    \frac{G^{\neq \nul}_j(M^{*-j}\xi)  \h \phi(M^{*-j}\xi)}{\norm{M^{*-j}}^{n}} 
    \right|^2\chi_{M^{*j}\T^d}     \dd \xi
    \le C \|M^{*-j}\|^{2N} m^j \to 0.
    \label{fIneq0}
    \end{equation}
    Second, integral over the 2nd term $2 {\mathcal Re} (a b)$ in the above template also tends to zero as $j \to + \infty$ by the Cauchy inequality, taking into account relation~(\ref{fIneq0}) and estimation~(\ref{fIneq1}) from below.
    Applying
    the Lebesgue dominated convergence theorem we, furthermore, can show that $\lim\limits_{j\to+\infty} I_1=0$.
    Indeed, by~(\ref{fSDO}) there exists a constant $C>0$ such that $\left|1-\h\phi \ol{\h{\w\phi}}\right|(\xi) \le C |\xi|^n$ for any $\xi \in \T^d$.  
    Therefore, for any $\xi \in \R^d$
    \begin{equation}
    \norm{M^{*j}}^{-2r}
    |\xi|^{2r}
    \left|\frac{\h f(\xi) \left(1-\h\phi \ol{\h{\w\phi}} \right)(M^{*-j}\xi)}{\norm{M^{*-j}}^{n}}\right|^2 \chi_{M^{*j}\T^d}(\xi)
    \le C^2 \left|\h f(\xi)\right|^2 |\xi|^{2n},
    \label{fIneq1}
    \end{equation}
    where the right-hand side is summable.
    Hence,
    \begin{align*}
    \lim_{j\to +\infty} I_1 = &
\int\limits_{\R^d}
		\left|\h f(\xi)\right|^2 |\xi|^{2r} \lim_{j\to +\infty} 
        \frac{\left|1-\h\phi \ol{\h{\w\phi}} \right|^2(M^{*-j}\xi)}
        {\norm{M^{*-j}}^{2n} \norm{M^{*j}}^{2r}} \chi_{M^{*j}\T^d}(\xi)
        \dd\xi 
    \\
    = & \int\limits_{\R^d}
		\left|\h f(\xi)\right|^2 |\xi|^{2n}  \lim\limits_{j\to +\infty} 
        \frac{\left|1-\h\phi \ol{\h{\w\phi}} \right|^2(M^{*-j}\xi)}
        {\|M^{*-j}\|^{2(n-r)}|\xi|^{2(n-r)}}
        \frac{\norm{M^{*-j}}^{-2r}}{\norm{M^{*j}}^{2r} }
        \dd\xi.
    \end{align*} 
    Since $\norm{M^{*-j}}\norm{M^{*j}} \ge 1$, then 
    $\frac{\norm{M^{*-j}}^{-2r}}{\norm{M^{*j}}^{2r} } \le 1$  and  for any $\xi\in\R^d$
    $$
        \frac{\left|1-\h\phi \ol{\h{\w\phi}} \right|(M^{*-j}\xi)}
        {\|M^{*-j}\|^{n-r}|\xi|^{n-r}} \frac{\norm{M^{*-j}}^{-r}}{\norm{M^{*j}}^{r} } \le
        \frac{\left|1-\h\phi \ol{\h{\w\phi}} \right|(M^{*-j}\xi)}
        {|M^{*-j}\xi|^{n-r}} \to 0 \quad \text{as} \ j \to +\infty
    $$
    by conditions~(\ref{fSDO}), which yields that
    $\lim\limits_{j\to+\infty} I_1=0$.

    2. Consider integral $I_2$. After splitting it into integrals over $M^{*j}\T^d+k$, $k\in\Z^d$, $k\neq \nul$ and changing the variables,
    we get that
    \begin{align*}
    I_2 &  = \frac{\norm{M^{*j}}^{-2r}}{\norm{M^{*-j}}^{2n}} 
    \int\limits_{\R^d \setminus M^{*j}\T^d}|\xi|^{2r}
    \left|\h f(\xi)-
        G_j(M^{*-j}\xi)\h \phi(M^{*-j}\xi) \right|^2\dd \xi
    \\
    & = \sum_{k\neq\nul}\frac{\norm{M^{*j}}^{-2r}}{\norm{M^{*-j}}^{2n}}
\int\limits_{M^{*j}\T^d}|\xi+M^{*j}k|^{2r} 
		\left|\h f(\xi+M^{*j}k) -
        G_j(M^{*-j}\xi)\h{\phi}(M^{*-j}\xi+k) \right|^2\dd\xi.
    \end{align*} 
    Again, expanding the brackets of the squared module according to the template 
    $|a-b|^2 = |a|^2 - 2 {\mathcal Re} (a b) + |b|^2$,
    the above integral $I_2$ can be represented as a sum of three terms $I_{21}+I_{22}+I_{23}$. 
    Consider each term separately.
    
    First, since $f\in {\cal S}$ and for any $N>0$ there exists a constant $C$ such that $|\h f(\xi)| \le \frac{C}{|\xi|^{N+r+n}}$ for any $\xi \in \R^d \setminus \T^d$, then 
    \begin{align*}
    I_{21} & =    
\int\limits_{M^{*j}\T^d}
\sum_{k\neq\nul}\frac{\norm{M^{*j}}^{-2r}}{\norm{M^{*-j}}^{2n}}|\xi+M^{*j}k|^{2r} 
		\left|\h f(\xi+M^{*j}k)\right|^2 \dd\xi = 
    \\
    & = \int\limits_{\R^d \setminus M^{*j}\T^d}
    \frac{\norm{M^{*j}}^{-2r}}{\norm{M^{*-j}}^{2n}}|\xi|^{2r} 
		\left|\h f(\xi)\right|^2 \dd\xi = 
       m^j \int\limits_{\R^d \setminus \T^d}
    \frac{\norm{M^{*j}}^{-2r}}{\norm{M^{*-j}}^{2n}}|M^{*j}\xi|^{2r} 
		\left|\h f(M^{*j}\xi)\right|^2 \dd\xi 
        \\
    & \le m^j \int\limits_{\R^d \setminus \T^d}
    \frac{C^2 |\xi|^{2r} \dd\xi }{\norm{M^{*-j}}^{2n} |M^{*j}\xi|^{2(N+n+r)}}
    \le m^j \int\limits_{\R^d \setminus \T^d}
    \frac{C^2 \norm{M^{*-j}}^{2(N+r)}  \dd\xi }{|\xi|^{2(N+n)}},
    \end{align*} 
   where $N>0$ is taken such that relation~(\ref{fMjEstimation2}) is valid and integral $\int\limits_{\R^d \setminus \T^d} \frac{\dd\xi}{|\xi|^{2(N+n)}}$ is convergent. Hence, 
    $\lim\limits_{j\to+\infty} I_{21}=0$.
   
   Second,
    \begin{align*}
    I_{22} & = 
    \sum_{k\neq\nul}\frac{\norm{M^{*j}}^{-2r}}{\norm{M^{*-j}}^{2n}}
\int\limits_{M^{*j}\T^d}|\xi+M^{*j}k|^{2r} 
		2{\mathcal Re}\left(\h f(\xi+M^{*j}k) \ol{G_j(M^{*-j}\xi)\h{\phi}(M^{*-j}\xi+k)}\right) \dd\xi 
    \\ & =
\int\limits_{M^{*j}\T^d}
		2{\mathcal Re}\left( \ol{G_j(M^{*-j}\xi)} 
        \sum_{k\neq\nul}  |\xi+M^{*j}k|^{2r} \h f(\xi+M^{*j}k)
        \h{\phi}(M^{*-j}\xi+k) \frac{\norm{M^{*j}}^{-2r}}{\norm{M^{*-j}}^{2n}}\right) \dd\xi.
    \end{align*}
    Lemma~\ref{lemGjMain} can be applied to state that
    $$
    |[\h g(M^{*j}\cdot),\h\phi]^{\neq\nul}(M^{*-j}\xi)| \le C \|M^{*-j}\|^{N+2n}
    $$
    for $\xi \in M^{*j}\T^d$ where $\h g = |\cdot|^{2r}\h f$  and 
    $N$ is such that  relation~(\ref{fMjEstimation2}) is valid. Also, note that $G_j$ is bounded.  
    Hence, $|I_{22}| \le C \|M^{*-j}\|^{N} m^j$ and 
    $\lim\limits_{j\to+\infty} I_{22}=0$.

    Third, denote
    $$
    A (M^{*-j}\xi)= \sum_{k\neq\nul}\norm{M^{*j}}^{-2r}|\xi+M^{*j}k|^{2r} 
		\left|\h{\phi}(M^{*-j}\xi+k) \right|^2,
    $$
    then
    \begin{align*}
    I_{23} & =    
\int\limits_{M^{*j}\T^d}
\frac{\left| G_j(M^{*-j}\xi) \right|^2}{\norm{M^{*-j}}^{2n}}
		 A (M^{*-j}\xi) \dd\xi = 
    \\
    & = \int\limits_{M^{*j}\T^d}
\frac{\left| G^{\neq\nul}_j(M^{*-j}\xi) \right|^2}{\norm{M^{*-j}}^{2n}}
		 A (M^{*-j}\xi)  \dd\xi+ \int\limits_{M^{*j}\T^d}
\frac{\left| \h f(\xi) \h{\w\phi}(M^{*-j}\xi) \right|^2}{\norm{M^{*-j}}^{2n}}
		 A (M^{*-j}\xi)  \dd\xi 
         \\
         & \hspace{1cm} +
\int\limits_{M^{*j}\T^d}
2{\mathcal Re}\left(\frac
{G^{\neq\nul}_j(M^{*-j}\xi) }{\norm{M^{*-j}}^{2n}}  
\ol{\h f(\xi)} \h{\w\phi}(M^{*-j}\xi) \right)
		 A (M^{*-j}\xi)  \dd\xi = J_1+J_2+J_3.
     \end{align*}
    Note that $A( M^{*-j}\xi) \le [\h\phi, \h\phi]_r^{\neq\nul}(M^{*-j}\xi)$ and, hence, it is bounded on $M^{*j}\T^d$. By Lemma~\ref{lemGjMain} 
    $|G^{\neq\nul}_j(M^{*-j}\xi)| \le C \|M^{*-j}\|^{N+n}$, where 
    $N$ is taken such that relation~(\ref{fMjEstimation2}) is valid.
    Thus, $\lim\limits_{j \to + \infty} J_1 = 0$.
    By similar arguments $\lim\limits_{j \to + \infty} J_3 = 0$ also.
    It remains to consider
    $$
    J_2 = \int\limits_{M^{*j}\T^d}
\left| \h f(\xi) \right|^2 |\xi|^{2n} 
\left|\h{\w\phi}(M^{*-j}\xi) \right|^2
		 \sum_{k\neq\nul}\frac{\norm{M^{*j}}^{-2r}|\xi+M^{*j}k|^{2r} 
		\left|\h{\phi}(M^{*-j}\xi+k) \right|^2 }{\norm{M^{*-j}}^{2n}|\xi|^{2n}}
        \dd\xi.
    $$
    By the Lebesgue dominated convergence theorem
    $$
    \lim\limits_{j \to + \infty} J_2 = 
    \int\limits_{\R^d}
\left| \h f(\xi) \right|^2 |\xi|^{2n} 
\left|\h{\w\phi}(0) \right|^2
		 \lim\limits_{j \to + \infty}\sum_{k\neq\nul}\frac{\norm{M^{*j}}^{-2r}|\xi+M^{*j}k|^{2r} 
		\left|\h{\phi}(M^{*-j}\xi+k) \right|^2 }{\norm{M^{*-j}}^{2n}|\xi|^{2n}}
        \dd\xi,
    $$
    since the right-hand side of the following inequality
    $$
    \left|\h{\w\phi}(M^{*-j}\xi) \right|^2\sum_{k\neq\nul}\frac{\norm{M^{*j}}^{-2r}|\xi+M^{*j}k|^{2r} 
		\left|\h{\phi}(M^{*-j}\xi+k) \right|^2 }{\norm{M^{*-j}}^{2n}|\xi|^{2n}} \le 
       \left|\h{\w\phi}(M^{*-j}\xi) \right|^2\frac{ [ \h\phi, \h\phi]^{\neq\nul}_r(M^{*-j}\xi)}{|M^{*-j}\xi|^{2n}}
    $$
    is bounded on $M^{*j} \T^d$, and, hence, we cen get a summable majorant.
    Also, by conditions~(\ref{fSDO}) this right-hand side tends to 0 as
    $j\to + \infty$ and $\lim\limits_{j \to + \infty} J_2 =0$.

    Overall,  $\lim\limits_{j \to + \infty} I_{23} =0$ and, therefore, 
    $\lim\limits_{j \to + \infty} I_{2} =0$, which yields that the quasi-projection operator $Q_j$ provides simultaneous density order $(s,n)$.

    Item 2. It remains to prove the necessity. Again, consider one term in the left-hand side of~(\ref{fSimult}) and by the above analysis we get
    \begin{align*}
		0 &= \lim\limits_{j \to + \infty}  \frac{\norm{M^{*j}}^{-2r}}{\norm{M^{*-j}}^{2n}} 
        \left| f-\smlm_{k \in \Z^d} \langle f, \w\phi_{jk}\rangle{\phi_{jk}} \right|_{H_r}^2 
        \\
        & = \lim\limits_{j\to +\infty}  \int\limits_{M^{*j}\T^d}
		\left|\h f(\xi)\right|^2 |\xi|^{2n}  
        \left(
        \frac{\left|1-\h\phi \ol{\h{\w\phi}} \right|^2(M^{*-j}\xi)}
        {\|M^{*-j}\|^{2(n-r)}|\xi|^{2(n-r)}}
        \frac{\norm{M^{*-j}}^{-2r}}{\norm{M^{*j}}^{2r} }
    \right.+
        \\
         & \hspace{2cm} \left. +
\left|\h{\w\phi}(M^{*-j}\xi) \right|^2
		 \sum_{k\neq\nul}\frac{\norm{M^{*j}}^{-2r}|\xi+M^{*j}k|^{2r} 
		\left|\h{\phi}(M^{*-j}\xi+k) \right|^2 }{\norm{M^{*-j}}^{2n}|\xi|^{2n}}
        \right)
        \dd\xi.
	\end{align*}  
    Denote the sum of two terms inside the parentheses under the integral by $T_r$. Consider $r=0$. Taking into account
    inequality~(\ref{fIsotropicM}) we have the following estimate from below
    $$
    T_0 \ge \frac{\left|1-\h\phi \ol{\h{\w\phi}} \right|^2(M^{*-j}\xi)}
        {\|M^{*-j}\|^{2n}|\xi|^{2n}}
         \ge 
         \frac{\left|1-\h\phi \ol{\h{\w\phi}} \right|^2(M^{*-j}\xi)}
        {\|M^{*-j}\|^{2n} \|M^{*j}\|^{2n} |M^{*-j}\xi|^{2n}} \ge 
        \frac{\left|1-\h\phi \ol{\h{\w\phi}} \right|^2(M^{*-j}\xi)}
        {C_2^{4n} |M^{*-j}\xi|^{2n}}.
    $$
    Suppose that the first condition in~(\ref{fSDO}) does not hold, i.e. there exists $\varepsilon_0>0$ such that for any $\delta < \frac 12$ there exists 
    $t\in B_{\delta}(\nul)$, $t\neq\nul$, such that $\frac{1}{|t|^{n}}\left|1-
		\h \phi(t) \ol{\h {\w \phi}(t)} \right| \ge \varepsilon_0$. 
        By continuity in some small neighborhood $U\subset \T^d$ of $t$ inequality $|\cdot|^{-n}\left|1-
		\h \phi \ol{\h {\w \phi}} \right| \ge \frac{\varepsilon_0}{2}$ holds. 
        Hence,
        $$
         0 \ge \lim\limits_{j\to +\infty} \int\limits_{M^{*j}\T^d}
        \left| \h f(\xi) \right|^2 |\xi|^{2n} 
        \frac{\left|1-\h\phi \ol{\h{\w\phi}} \right|^2(M^{*-j}\xi)}
        {| M^{*-j}\xi|^{2n}}\dd\xi \ge 
        \lim\limits_{j\to +\infty} \int\limits_{M^{*j}U}
        \left| \h f(\xi) \right|^2 |\xi|^{2n} 
        \frac{\varepsilon^2_0}{4}\dd\xi > 0,
        $$
        which gives a contradiction. Thus, $\lim\limits_{t\rightarrow \nul} \frac{1}{|t|^{n}}\left|1-
		\h \phi(t) \ol{\h {\w \phi}(t)} \right|
         = 0$ and, 
        in particular, 
         $1-
		\h \phi(\nul) \ol{\h {\w \phi}(\nul)} = 0$. Thus, 
        $\h {\w \phi}(\nul) \neq 0$. Together with this fact by similar arguments we can show that 
        $\lim\limits_{t \rightarrow \nul} \frac{1}{|t|^{2n}} [\h\phi, \h\phi]_r^{\ne \nul}(t) = 0$ for $r=0,...,s.$ $\qed$  

        Theorem~\ref{density} is stated for functions $f$ from the Schwartz class ${\mathcal S}$. However, by density arguments this theorem can be extended to functions from the Sobolev space $H^n(\R^d)$. The proof is based on the Moore-Osgood theorem on exchanging limits.
        
    \begin{lem}~\emph{\cite[Theorem 7.11]{Rudin}}
    \label{Moora}
		Let $\{a_{i,j}\}_{i,j\in\N}$ be a double numerical  sequence. If
		\begin{enumerate}
			\item $a_{i,j}$ converges uniformly to $a_j\in \R$ as $i \to +\infty$, i.e.
			$\lim\limits_{i \rightarrow +\infty}\sup\limits_{j} |a_{i,j} - a_j| = 0$;
			\item for any fixed $i$ sequence $a_{i,j}$ converges to $a_i\in \R$ as $i\to +\infty$,
		\end{enumerate}
		then there exists a limit $\lim\limits_{j\rightarrow +\infty} a_j$ and
		$
			\lim\limits_{i \rightarrow +\infty}\lim\limits_{j \rightarrow +\infty} a_{i,j} =
			\lim\limits_{j \rightarrow +\infty}\lim\limits_{i \rightarrow +\infty} a_{i,j}.
		$
	\end{lem}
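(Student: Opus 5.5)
We read the hypotheses as follows (the second one has a misprinted index). There is a sequence $b_j$ with $\lim_{i\to+\infty}\sup_j|a_{i,j}-b_j|=0$. For each fixed $i$ the limit $c_i:=\lim_{j\to+\infty}a_{i,j}$ exists. We rename the two limit sequences $b_j$ and $c_i$ because the statement writes both as $a_\cdot$. The goal is to show that $\lim_{j}b_j$ exists and that it equals $\lim_i c_i$. This gives the claimed equality $\lim_i\lim_j a_{i,j}=\lim_j\lim_i a_{i,j}$. The argument is the classical $\varepsilon/3$ scheme: first show that $\{c_i\}$ is Cauchy, then compare $b_j$ with its limit.

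\emph{Step 1: $\{c_i\}$ is Cauchy.} Fix $\varepsilon>0$. By uniform convergence choose $I$ such that $\sup_j|a_{i,j}-b_j|<\varepsilon/2$ for all $i\ge I$. For $i,i'\ge I$ the triangle inequality gives $|a_{i,j}-a_{i',j}|<\varepsilon$ for every $j$. Keeping $i,i'$ fixed and letting $j\to+\infty$ (using the second hypothesis) yields $|c_i-c_{i'}|\le\varepsilon$. Hence $\{c_i\}$ is Cauchy in $\R$, and it converges to some $L\in\R$ by completeness.

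\emph{Step 2: $b_j\to L$.} For any $i$ and $j$ we split
\[
|b_j-L|\le |b_j-a_{i,j}|+|a_{i,j}-c_i|+|c_i-L|.
\]
Given $\varepsilon>0$, we first fix one index $i$ so large that $\sup_j|b_j-a_{i,j}|<\varepsilon/3$ and $|c_i-L|<\varepsilon/3$. Both choices are possible, by uniform convergence and by Step 1 respectively. With this $i$ fixed, the second hypothesis gives $J$ such that $|a_{i,j}-c_i|<\varepsilon/3$ for all $j\ge J$. Then $|b_j-L|<\varepsilon$ for $j\ge J$. Thus $\lim_j b_j$ exists and $\lim_j\lim_i a_{i,j}=\lim_j b_j=L=\lim_i c_i=\lim_i\lim_j a_{i,j}$.

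The only point requiring care is the order of choices in Step 2. The index $i$ must be fixed before $j$ is sent to infinity, and it is the uniformity in $j$ of the first hypothesis that makes this legitimate. Apart from this there is no genuine obstacle.
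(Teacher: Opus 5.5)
Your proof is correct and is the classical argument behind Rudin's Theorem 7.11, which the paper cites rather than proves. Uniform convergence makes the inner limits $c_i$ a Cauchy sequence, and an $\varepsilon/3$ splitting, with $i$ fixed before letting $j\to+\infty$, gives $b_j\to\lim_i c_i$. Your reading of the misprinted second hypothesis (limit as $j\to+\infty$) and your renaming of the two limit sequences are the intended interpretation.
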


	\begin{col}
        Item 1 and Item 2 of Theorem~\ref{density}
        can be stated for functions $f$ from $H^s(\R^d)$. 
	\end{col}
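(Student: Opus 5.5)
The plan is to derive both items for general $f$ from their versions on $\mathcal S$ (Theorem~\ref{density}) by a density argument. The exchange of limits is justified by Lemma~\ref{Moora}, and the required uniformity in $j$ comes from the quantitative bound of Theorem~\ref{theoMainAppr}. Fix $0\le r\le s$ and write $E_j g := g-\sum_{k}\langle g,\w\phi_{jk}\rangle\phi_{jk}$. By Theorem~\ref{theoMainAppr}, $E_j$ is well defined and linear on $H^s(\R^d)$. It suffices to treat one summand of~(\ref{fSimult}), namely
\[
a_j(g):=\norm{M^{*-j}}^{-n}\norm{M^{*j}}^{-r}\left|E_j g\right|_{H^r}.
\]

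For the sufficiency direction (Item~1, and the "if" part of Item~2), assume~(\ref{fSDO}). These conditions imply~(\ref{cond6}), since near the origin they give the bounds with constants and away from the origin all quantities are bounded. Hence Theorem~\ref{theoMainAppr} applies and, since $E_j$ is linear, gives for every admissible $g$
\[
\left|a_j(g)-a_j(h)\right|\le a_j(g-h)\le C\,|g-h|_{H^n},
\]
with $C$ independent of $j$.

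Now take $f\in H^s(\R^d)$ for which the left-hand side of~(\ref{fSimult}) is meaningful, i.e. $|f|_{H^n}<\infty$. Choose $f_i\in\mathcal S$ with $\h f_i$ obtained by truncating and mollifying $\h f$, so that simultaneously $\|f-f_i\|_{H^s}\to0$ and $|f-f_i|_{H^n}\to0$. This is possible because both norms are weighted $L_2$ norms of $\h f$.

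Set $a_{i,j}:=a_j(f_i)$. Then:
\begin{itemize}
\item $\sup_j|a_{i,j}-a_j(f)|\le C|f-f_i|_{H^n}\to0$, which is hypothesis~1 of Lemma~\ref{Moora}.
\item For each fixed $i$, $a_{i,j}\to0$ as $j\to+\infty$ by Theorem~\ref{density}, which is hypothesis~2.
\end{itemize}
Lemma~\ref{Moora} then yields $\lim_{j}a_j(f)=\lim_i\lim_j a_{i,j}=0$. Summing over $r=0,\dots,s$ gives~(\ref{fSimult}) for $f$.

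The necessity part of Item~2 needs no approximation. If $Q_j$ provides simultaneous density order $(s,n)$ for all such $f\in H^s(\R^d)$, it does so in particular for $f\in\mathcal S\subset H^s(\R^d)$. The "only if" part of Theorem~\ref{density}, Item~2, then gives~(\ref{fSDO}) for isotropic $M$.

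The main obstacle is the uniform-in-$j$ estimate. The factor $\norm{M^{*-j}}^{-n}$ blows up, so mere $H^s$-closeness of $f_i$ to $f$ does not control $a_j(f-f_i)$ uniformly in $j$. The approximation must therefore be chosen to converge in the seminorm $|\cdot|_{H^n}$ as well, and the constant from Theorem~\ref{theoMainAppr} must not depend on $j$. I would verify carefully that this constant is indeed independent of $j$ (it is, since the proof of Theorem~\ref{theoMainAppr} only rescales Theorem~\ref{thmMain}), and that~(\ref{fSDO}) implies~(\ref{cond6}) so that theorem is applicable.
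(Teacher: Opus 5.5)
Your proposal is correct and is essentially the paper's argument. You approximate $f\in H^n(\R^d)$ by Schwartz functions in the $H^n$ (semi)norm, use that (\ref{fSDO}) implies (\ref{cond6}) so that Theorem~\ref{theoMainAppr} gives the $j$-uniform bound $a_j(f-f_i)\le C|f-f_i|_{H^n}$, exchange limits via Lemma~\ref{Moora}, and get necessity in Item~2 by restricting to $\mathcal S\subset H^n(\R^d)$. The only cosmetic differences are that you construct the approximants explicitly (truncating and mollifying $\h f$) instead of citing density of $\mathcal S$ in $H^n(\R^d)$, and that you describe the admissible class as $f\in H^s(\R^d)$ with $|f|_{H^n}<\infty$, which coincides with $H^n(\R^d)$ since $s\ge 0$.
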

	\textbf{Proof.} 
    Consider Item 1. Suppose $f\in H^n(\R^d)$. 
	Since $\mathcal S$ is dense in $H^n(\R^d)$, there exists a sequence $\{f_i\}_{i\in\N} \subset {\mathcal S}$ 
    such that $\| f - f_i \|_{H^n} < \frac{1}{i}$ for $i\in \N$. Fix $0 \le r \le s$ and denote
	\[
		a_{i,j} = \| M^{*-j} \|^{-n}  \| M^{*j} \|^{-r} \left| f_i - Q_j(f_i) \right|_{H^r}.
	\]
	By Theorem~\ref{density} $\lim\limits_{j\rightarrow +\infty} a_{i,j} = 0$ for any $i\in\N$. 
    Conditions~(\ref{fSDO}) imply
    conditions~(\ref{cond6}) and, therefore, by Theorem \ref{theoMainAppr}  we get
	\begin{align*}
		& \lim\limits_{i \rightarrow +\infty} \sup\limits_{j} \| M^{*-j} \|^{-n}  \| M^{*j} \|^{-r} \Big| |f - Q_j(f)|_{H^r} - |f_i - Q_j(f_i)|_{H^r} \Big| 
        \\
		 &\hspace{2cm}\leq \lim\limits_{i \rightarrow +\infty} \sup\limits_{j} \| M^{*-j} \|^{-n}  \| M^{*j} \|^{-r} \Big| (f - f_i) - Q_j(f-f_i) \Big|_{H^r} 
         \\
		 &\hspace{2cm}\leq \lim\limits_{i \rightarrow +\infty} \sup\limits_{j} C |f - f_i|_{H^n} = 0.
	\end{align*} 
    Therefore, $a_{i,j} \rightarrow a_j = \| M^{*-j} \|^{-n}  \| M^{*j} \|^{-r} \left| f - Q_j(f) \right|_{H^r}$ as $i\to+\infty$ uniformly by $j\in \N$.
	By Lemma~\ref{Moora} we get
	$
			0 = \lim\limits_{i \rightarrow +\infty}\lim\limits_{j \rightarrow +\infty} a_{i,j} =
		\lim\limits_{j \rightarrow +\infty}\lim\limits_{i \rightarrow +\infty} a_{i,j}	
	$, i.e.

	\[
		\lim\limits_{j\rightarrow +\infty}  \| M^{*-j} \|^{-n}  \| M^{*j} \|^{-r} \left| f - Q_j(f) \right|_{H^r} = 0. 
	\]
    Item 2 is clear by Item 1 and since ${\mathcal S} \subset H^n(\R^d)$. $\qed$


\section{Approximation by wavelet systems}
\label{sec4}

A direct application of the established results is related to studying the approximation properties of wavelets.
    In what follows, we assume that $M$ is an isotropic dilation matrix.
    The construction of wavelet systems starts from a suitable pair of scaling functions $\phi$ and $\widetilde\phi$. 
    A \textit{scaling function} $\phi$ is a solution of the scaling equation
    \begin{equation}
    \phi(x) = m \sum_{k\in\mathbb{Z}^d} a(k) \phi(M x + k),
    \label{fRE}
    \end{equation}
    where $x\in\mathbb{R}^d$, $a=\{a(k)\}_{k\in\mathbb{Z}^d}\in \ell_0(\mathbb{Z}^d)$.
Applying the Fourier transform to the scaling equation~\eqref{fRE}, we obtain
    \begin{equation}
    \widehat\phi(\xi) = \widehat a
    \left(M^{*-j}\xi\right)
    \widehat\phi\left(M^{*-j}\xi\right),
    \qquad \xi \in \mathbb{R}^d.
    \label{fRE_Fourier}
    \end{equation}
A sequence $a\in \ell_0(\mathbb{Z}^d)$ with $\widehat a(\nul)=1$ is called a \textit{mask}.
It is known that for a mask $a$ the solution of the scaling equation~\eqref{fRE} is unique up to a constant factor; moreover, $\phi$ is a tempered distribution with compact support (see, e.g.,~\cite[Theorem 2.6.4]{KPS}). 
In order to obtain $s\in \R$ such that $\phi$ belongs to $H^s(\R^d)$ we can compute 
the Sobolev smoothness exponent for $\phi$, which is defined by
$$
\nu_2(\phi)=
        \sup \left\{ \nu\in\R:
         \int\limits_{\R^d} |\h \phi (\xi)|^2 (1+ |\xi|^2)^{\nu} 
         d \xi < +\infty\right\}.
$$
Thus, $\phi \in H^s(\R^d)$ for $s< \nu_2(\phi).$
This quantity can be estimated by effective algorithm developed in~\cite{HanSmooth}.
Below, we assume that the scaling functions are normalized, i.e. $\widehat\phi(\nul)=1$.

A standard scheme for constructing so called MRA-based wavelet systems relies on the unitary extension principle and its modifications (see, e.g.,~\cite{RS2},~\cite{KPS}). Consider the following setup.
Let $a,\widetilde a\in\ell_0(\mathbb{Z}^d)$ be two masks, and 
let $\phi,\widetilde\phi$ be the corresponding scaling functions 
such that $\nu_2(\phi) > s$ and $\nu_2(\w\phi) > - s$ for some $s \ge 0$.
Hence, $\phi \in H^s(\R^d)$ and $\w\phi \in H^{-s}(\R^d)$.
For convenience, we set
\[
b_{1} := a, \qquad \widetilde b_{1} := \widetilde a.
\]
Let $D(M)$ be the set of digits of dilation matrix $M$, for instance, we can set 
$D(M) = M [0,1)^d \cap \Z^d$, the cardinality of $D(M)$ is equal to $m$ (for details, see, e.g.,~\cite{KPS}).
Suppose that we can find trigonometric polynomials $\widehat b_{\mu}(\xi)$, $\widehat{\widetilde b}_{\mu}(\xi)$, $\mu=2,\dots,u$, with $u\ge m$, called \emph{wavelet masks}, such that the following $u\times m$ matrices
\[
{\mathcal L}(\xi):=
\Bigl\{\widehat b_{\mu}\bigl(\xi+M^{*-j}s\bigr)\Bigr\}_{\mu=1,\dots,u}^{s\in D(M^*)},
\qquad
\widetilde{\mathcal L}(\xi):=
\Bigl\{\widehat{\widetilde b}_{\mu}\bigl(\xi+M^{*-j}s\bigr)\Bigr\}_{\mu=1,\dots,u}^{s\in D(M^*)},
\]
satisfy
\begin{equation}
    {\mathcal L}(\xi)^* \widetilde{\mathcal L}(\xi) \equiv I_m
    \label{fcalL2},
\end{equation}
i.e., the columns of these matrices are pairwise biorthogonal.
The \textit{wavelet functions} $\psi_{\mu}$, $\widetilde\psi_{\mu}$, $\mu = 2,\dots, u$, can be defined through their Fourier transforms as
\begin{equation}
    \widehat{\psi_{\mu}}(\xi)=
    \widehat b_{\mu}\left(M^{*-j}\xi\right) \widehat\phi\left(M^{*-j}\xi\right), \quad
    \widehat{\widetilde\psi_{\mu}}(\xi)=
    \widehat{\widetilde b}_{\mu}\left(M^{*-j}\xi\right) \widehat{\widetilde\phi}\left(M^{*-j}\xi\right).
    \label{PsiDef}
\end{equation}
Note that $\psi_{\mu} \in H^s(\R^d)$, $\widetilde\psi_{\mu} \in H^{-s}(\R^d)$ and have compact supports. 
The obtained pair of systems $\{\psi_{\mu,j,k}\}$, $\{\widetilde\psi_{\mu,j,k}\}$
is called a \textit{MRA-based wavelet system} in $(H^s(\R^d), H^{-s}(\R^d))$  generated by the pair of scaling functions
$\phi, \widetilde\phi$.

Quasi-projection operators are related to wavelet systems via a special relation known as the perfect reconstruction property 
(see, e.g.~\cite[Lemma 4.3.1]{KPS}).

\begin{lem}
    \label{lemPRP}
    Let $s \ge 0$, functions
		$\phi \in H^s(\R^d)$ and $\w \phi \in H^{-s}(\R^d)$ 
        have compact supports, $f \in H^s(\R^d)$. Suppose $\{\psi_{\mu,j,k}\}$, $\{\widetilde\psi_{\mu,j,k}\}$ is a pair of MRA-based wavelet system generated by $\phi, \widetilde\phi$, $\mu=2,\dots,u$, $u\ge m$, $j,j'\in\mathbb{Z}$, $j'>j$.
    Then 
    \begin{equation}
        \sum_{k\in\mathbb{Z}^d} 
        \langle f, \widetilde\phi_{j'k}\rangle
        \phi_{j'k} -
        \sum_{k\in\mathbb{Z}^d} 
        \langle f, \widetilde\phi_{jk}\rangle
        \phi_{jk}
        =
        \sum_{i=j}^{j'-1}
        \sum_{\mu=2}^{u}
        \sum_{k\in\mathbb{Z}^d}
        \langle f, \widetilde\psi_{\mu,i,k}\rangle
        \psi_{\mu,i,k}.
        \label{fLemPRP}
    \end{equation}
\end{lem}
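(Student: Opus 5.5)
The plan is to reduce \eqref{fLemPRP} to a single-level identity and verify that identity on the Fourier side, using the biorthogonality \eqref{fcalL2}. I read the factor $M^{*-j}$ in \eqref{fRE_Fourier}, \eqref{PsiDef} and in the definition of ${\mathcal L},\w{\mathcal L}$ as $M^{*-1}$, which is the one-step refinement relation these formulas are meant to express.

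\emph{Reductions.} Since the left-hand side of \eqref{fLemPRP} telescopes, it suffices to prove
\[
Q_{i+1}(f)-Q_i(f)=\sum_{\mu=2}^u\sum_{k\in\Z^d}\langle f,\w\psi_{\mu,i,k}\rangle\psi_{\mu,i,k}
\]
for each $i$. Next I use the identity $\langle f,\w\phi_{ik}\rangle=m^{-i/2}\langle f(M^{-i}\cdot),\w\phi_{0k}\rangle$ from the proof of Theorem~\ref{theoMainAppr}, together with its analogue for $\w\psi_\mu$. Replacing $f$ by $f(M^{-i}\cdot)\in H^s(\R^d)$ then reduces the claim to the case $i=0$.

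\emph{Convergence.} All the series involved converge unconditionally in $H^s(\R^d)$ by Item~1 of Theorem~\ref{thmMain}. This applies because $\psi_\mu\in H^s(\R^d)$ and $\w\psi_\mu\in H^{-s}(\R^d)$ are compactly supported, and $\phi_{1k},\w\phi_{1k}$ are dilates of such functions. Consequently I may compare Fourier transforms a.e.

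\emph{Fourier computation for $i=0$.} By Lemma~\ref{lemCoef} and Remark~\ref{rem1}, the Fourier transform of $\sum_k\langle f,\w\psi_{\mu,0,k}\rangle\psi_{\mu,0,k}$ equals $[\h f,\h{\w\psi}_\mu](\xi)\,\h\psi_\mu(\xi)$. The same holds for $\mu=1$ if I set $\psi_1:=\phi$ and $\w\psi_1:=\w\phi$; by \eqref{fRE_Fourier} this choice is consistent with $b_1=a$. Put $\eta=M^{*-1}\xi$ and substitute \eqref{PsiDef}. I then split the summation index $l\in\Z^d$ of the bracket into the cosets $l=M^*n+s$ with $s\in D(M^*)$ and $n\in\Z^d$, so that $M^{*-1}(\xi+l)=\eta+M^{*-1}s+n$. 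Using the $1$-periodicity of the masks, this gives
\[
[\h f,\h{\w\psi}_\mu](\xi)\h\psi_\mu(\xi)
=\h b_\mu(\eta)\h\phi(\eta)\sum_{s\in D(M^*)}\ol{\h{\w b}_\mu(\eta+M^{*-1}s)}\;F_s(\eta),
\]
where
\[
F_s(\eta)=\sum_{n\in\Z^d}\h f\bigl(M^*(\eta+M^{*-1}s+n)\bigr)\,\ol{\h{\w\phi}(\eta+M^{*-1}s+n)} .
\]
Summing over $\mu=1,\dots,u$ and using \eqref{fcalL2} in the form $\sum_\mu \h b_\mu(\eta)\ol{\h{\w b}_\mu(\eta+M^{*-1}s)}=\delta_{s,\nul}$ (this is the $(\nul,s)$ entry of ${\mathcal L}^*\w{\mathcal L}$, up to conjugation), only the term $s=\nul$ survives. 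What remains is
\[
\h\phi(M^{*-1}\xi)\,[\h f(M^*\cdot),\h{\w\phi}](M^{*-1}\xi),
\]
and by the computation of $G_j$ in the proof of Theorem~\ref{density}, with $j=1$, this is exactly the Fourier transform of $Q_1(f)$. Meanwhile the $\mu=1$ term is the Fourier transform of $Q_0(f)$. Moving it to the other side gives the claim.

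\emph{Main obstacle.} The delicate step is justifying the regrouping of the bracket series into cosets and the interchange of the finite sums over $\mu$ and $s$ with the sum over $n$, for a general $f\in H^s(\R^d)$. I would handle it by showing that all these series converge absolutely for a.e.\ $\xi$. By the Cauchy inequality, as in Lemma~\ref{lem3}, each term is dominated by
\[
[\h f,\h f]_s^{1/2}\,[\h{\w\phi},\h{\w\phi}]_{-s}^{1/2}
\]
evaluated at the appropriate points. The first factor is finite a.e.\ since it lies in $L_1(\T^d)$, and the second is bounded by Remark~\ref{rem1}. If needed, I would first prove the identity for $f\in\mathcal S$ and then extend it to $H^s(\R^d)$ by density, using the continuity of both sides in $f$ given by Theorem~\ref{thm1}.
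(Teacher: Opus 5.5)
Your proof is correct. The paper gives no argument for this lemma and only refers to \cite[Lemma 4.3.1]{KPS}, so your proposal supplies a self-contained proof where the paper has a citation.

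Your route is the standard Fourier-side derivation of perfect reconstruction identities:
\begin{itemize}
\item you telescope the left-hand side;
\item you reduce to a single level by the dilation $f\mapsto f(M^{-i}\cdot)$;
\item you split the bracket sum over the cosets $M^*\Z^d+s$, $s\in D(M^*)$;
\item you kill all terms with $s\neq\nul$ using the $(\nul,s)$ entries of ${\mathcal L}^*\w{\mathcal L}=I_m$.
\end{itemize}
What your version adds over the bare citation is a check that each step survives in the present $(H^s,H^{-s})$ setting, where $\w\phi$ is a distribution. Unconditional convergence of every series in $H^s(\R^d)$ comes from Item~1 of Theorem~\ref{thmMain}. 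Absolute a.e.\ convergence of the bracket series follows from Cauchy's inequality with the weights $(1+|\cdot|^2)^{\pm s}$ and the boundedness of $[\h{\w\phi},\h{\w\phi}]_{-s}$, and this justifies the regrouping into cosets. The citation leaves these points implicit.

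Two small points to tidy up:
\begin{itemize}
\item $Q_1$ is not literally a level-$0$ operator built from dilated generators, since its shifts are $M^{-1}k$. Its well-definedness follows instead from $Q_1(f)=Q_0\bigl(f(M^{-1}\cdot)\bigr)(M\cdot)$, as in the proof of Theorem~\ref{theoMainAppr}.
\item The formula $\h{Q_1(f)}=G_1(M^{*-1}\cdot)\,\h\phi(M^{*-1}\cdot)$ was derived in the proof of Theorem~\ref{density} only for $f\in\mathcal S$. For $f\in H^s(\R^d)$ you should either repeat that computation using Lemma~\ref{lemCoef} applied to $f(M^{-1}\cdot)$, or invoke your density fallback.
\end{itemize}
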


In~\cite{Han} the notion of dual wavelet frames in a pair of dual Sovolev spaces 
    was introduced and studied for the case $M=2 I_d$ and later these results were extended
    in~\cite{Zhang18} for the case of isotropic dilation matrices.
    In particular, if a pair of wavelet systems $(\{\phi_{0k}\}, \{\psi^s_{\mu,j,k}\})$, $(\{\w\phi_{0k}\}, \{\widetilde\psi^{-s}_{\mu,j,k}\})$ (here $j \ge 0$ and  
    $\psi^s_{\mu,j,k} = m^{js}\psi_{\mu,j,k}$, $\widetilde\psi^{-s}_{\mu,j,k} = 
    m^{-js}\widetilde\psi_{\mu,j,k}$) forms a dual wavelet frame in 
    $(H^s(\R^d), H^{-s}(\R^d))$, then each system is a frame $H^s(\R^d)$ and 
    $H^{-s}(\R^d)$, respectively, and 
    $$
    f = \sum\limits_{k \in \Z^d} \langle f, \w{\phi}_{0k} \rangle \phi_{0k} + 
    \sum_{j=0}^{+\infty}
        \sum_{\mu = 2}^{u}
        \sum_{k\in\mathbb{Z}^d}
        \langle f, \widetilde\psi_{\mu,j,k}\rangle
        \psi_{\mu,j,k}, 
        \quad \text{for } f\in H^s(\R^d),
    $$
    where the series converges unconditionally in $H^s(\R^d).$
    The following statement was established in~\cite[Theorem 4.1]{Zhang18}.
\begin{thm}
    Let $s\ge 0$. Assume that $\{\psi_{\mu,j,k}\}$, $\{\widetilde\psi_{\mu,j,k}\}$ is a MRA-based wavelet system  
in $(H^s(\R^d), H^{-s}(\R^d))$ generated by $\phi, \widetilde\phi$. If there exist numbers 
$\alpha \ge 0$ and $\w\alpha \ge 0$ with $\alpha > -s$ and $\w\alpha > s$, such that the following
conditions on vanishing moments hold
$$
\h {b_{\mu}}(\xi) = O(|\xi|^{\alpha}), \quad 
\h {{\w b}_{\mu}}(\xi) = O(|\xi|^{\w\alpha}) \quad
\text{as } \xi \to 0, \quad \mu=2,\dots,u,
$$
then the MRA-based wavelet system produces a pair of wavelet dual frame in 
a pair of dual Sobolev spaces
    $(H^s(\R^d), H^{-s}(\R^d))$.
    \label{fDualWF}
\end{thm}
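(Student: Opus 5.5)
Our strategy is to follow the Han / Zhang route for dual wavelet frames in a pair of dual Sobolev spaces: reduce the frame property to Bessel estimates for both systems plus a perfect reconstruction identity on a dense subspace. Throughout, $M$ is isotropic, so by (\ref{fIsotropicM}) we have $\|M^{*j}\|\asymp m^{j/d}$, and the weights $m^{\pm js}$ can be turned into frequency weights $(1+|\xi|^2)^{\pm s}$ on the dyadic-type shells $M^{*j}\T^d$. (We read the normalizations $\psi^s_{\mu,j,k}$ as $m^{-js/d}\psi_{\mu,j,k}$ up to the convention of~\cite{Zhang18}.)

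\textbf{Step 1: the Bessel property of $(\{\phi_{0k}\},\{\psi^s_{\mu,j,k}\})$ in $H^s(\R^d)$.} We must show
$\sum_k|\langle f,\phi_{0k}\rangle_{H^s}|^2+\sum_{j\ge0}\sum_{\mu,k}|\langle f,\psi^s_{\mu,j,k}\rangle_{H^s}|^2\le C\|f\|_{H^s}^2$. Using Lemma~\ref{lemCoef} at each level after dilation, the $j$-th level sum is bounded by
\[
\int_{\R^d}|\h f(\xi)|^2(1+|\xi|^2)^s\, m^{-2js/d}\sum_{\mu}\sum_{l\in\Z^d}|\h\psi_\mu(M^{*-j}\xi+l)|^2(1+|\xi|^2)^{s}\,\dd\xi
\]
(up to the standard cross terms). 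It therefore suffices to prove that
\[
\sup_{\xi}\sum_{j\ge0}\sum_\mu m^{2js/d}\bigl|\h\psi_\mu(M^{*-j}\xi)\bigr|^2(1+|\xi|^2)^{-s}<\infty,
\]
together with an analogous bound for the bracket with $l\neq\nul$. Near the origin we use $\h\psi_\mu(\eta)=O(|\eta|^{\alpha})$, which comes from the vanishing moments of $b_\mu$. At infinity we use the decay $\int|\h\phi|^2(1+|\xi|^2)^{\nu}<\infty$ for $\nu<\nu_2(\phi)$, upgraded to the bracket estimate $[\h\phi,\h\phi]_{s'}\in L_\infty$ for some $s'>s$ via Remark~\ref{rem1}, since $\phi$ has compact support and $\nu_2(\phi)>s$. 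The sum over $j$ then becomes two geometric series, one for $M^{*-j}\xi$ small and one for $M^{*-j}\xi$ large. These converge precisely because $\alpha>-s$ gives decay as $j\to\infty$ at small arguments, and $s'>s$ handles the large arguments. The dual system in $H^{-s}$ is treated identically, with $-s$ in place of $s$ and $\w\alpha>s$.

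\textbf{Step 2: perfect reconstruction.} For $f$ in a dense class (Schwartz functions with $\h f$ compactly supported away from nothing special), Lemma~\ref{lemPRP} with $j=0$ and $j'\to\infty$ gives
\[
f - Q_{j'}(f) = f - Q_0(f) - \sum_{i=0}^{j'-1}\sum_\mu\sum_k\langle f,\w\psi_{\mu,i,k}\rangle\psi_{\mu,i,k}.
\]
Hence we need $Q_{j'}(f)\to f$ weakly in $H^s$, meaning $\langle Q_{j'}f,g\rangle\to\langle f,g\rangle$ for $g$ in a dense subset of $H^{-s}$. This uses $\h\phi(\nul)=\h{\w\phi}(\nul)=1$ (the masks' normalization), which gives $1-\h\phi\ol{\h{\w\phi}}\to0$ at $0$, together with the bracket bounds of Step 1. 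Following the computation in the proof of Theorem~\ref{density}, the $l\ne\nul$ aliasing terms are killed by the factor $\|M^{*-j}\|^N m^j$ as in Lemma~\ref{lemGjMain}. Combining this with Step 1, where the Bessel bounds give unconditional convergence and continuity, the reconstruction identity extends to all $f\in H^s$, and by symmetry to $H^{-s}$. The dual frame property then follows by the standard argument: a Bessel pair with perfect reconstruction yields lower frame bounds via Cauchy–Schwarz.

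\textbf{Main obstacle.} We expect the hardest part to be the uniform bracket estimate in Step 1 with the Sobolev weights mixed across scales. The terms with $l\neq\nul$ at level $j$ involve $\h\psi_\mu(M^{*-j}\xi+l)$, and we must control the cross-scale weight $(1+|\xi|^2)^{s}$ against $(1+|M^{*-j}\xi+l|^2)^{s}m^{2js/d}$. To do this we intend to use isotropy, through $|M^{*j}\eta|\asymp m^{j/d}|\eta|$, and then apply the Cauchy-type splitting used in Lemma~\ref{lem3}, reducing everything to $\|[\h\psi_\mu,\h\psi_\mu]_s\|_{L_\infty}$ together with the small-argument vanishing-moment bound.
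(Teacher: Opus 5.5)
The paper does not prove this theorem; it quotes \cite[Theorem 4.1]{Zhang18}. Your architecture is the Han--Shen/Li--Zhang one: Bessel bounds for both nonhomogeneous systems, the identity of Lemma~\ref{lemPRP}, $\langle Q_jf,g\rangle\to\langle f,g\rangle$ on a dense class, extension by continuity, and lower bounds via Cauchy--Schwarz. Step~2 is fine, and simpler than you make it. For $\h f,\h g$ compactly supported, every term with $l\neq\nul$ vanishes identically once $j$ is large, so Lemma~\ref{lemGjMain} is not needed.

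The gap is in Step~1, which is where all the hypotheses are used.

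\textbf{Inverted weights.} Your displayed criterion has the weights inverted relative to your own preceding display. With $\psi^s_{\mu,j,k}=m^{-js/d}\psi_{\mu,j,k}$, the diagonal multiplier for $\{\psi^s_{\mu,j,k}\}$ in $H^s$ is $(1+|\xi|^2)^s m^{-2js/d}|\h\psi_\mu(M^{*-j}\xi)|^2\asymp|M^{*-j}\xi|^{2s}|\h\psi_\mu(M^{*-j}\xi)|^2$ for $|\xi|\ge1$. The expression $m^{2js/d}(1+|\xi|^2)^{-s}|\h\psi_\mu(M^{*-j}\xi)|^2$ that you wrote is the criterion for the dual system in $H^{-s}$. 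For it, the small-argument series converges only if the vanishing-moment order exceeds $s$, so $\alpha>-s$ does not give what you claim.

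\textbf{Aliasing terms.} More seriously, the terms with $l\neq\nul$ are not controlled, and the route you indicate cannot control them.
\begin{enumerate}
\item In your first display all weights sit on $\h f$. The resulting multiplier $(1+|\xi|^2)^s m^{-2js/d}\sum_l|\h\psi_\mu(M^{*-j}\xi+l)|^2$ is unbounded in $\xi$ for $s>0$. The reason is that the bracket is a nonzero $1$-periodic trigonometric polynomial.
\item Moving the weight onto $\h\psi_\mu(\cdot+l)$, as in Lemma~\ref{lem3}, bounds each level by $C\|[\h\psi_\mu,\h\psi_\mu]_s\|_{L_\infty}\|f\|_{H^s}^2$. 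This is uniform in $j$ but not summable over $j\ge0$.
\item An $\ell_1$-in-$l$ bound of Chui--Shi type would need pointwise decay of $\h\psi_\mu$, which $\nu_2(\phi)>s$ does not give.
\end{enumerate}

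\textbf{Missing ingredient.} What is needed is a scale-localizing factor inside the weighted Cauchy--Schwarz. Fix a small $\epsilon>0$ and set $\lambda_j(\xi)=\min\{r^{\epsilon},r^{-\epsilon}\}$ with $r=m^{j/d}/(1+|\xi|)$, so that $\sum_{j\ge0}\lambda_j(\xi)\le C$. Take $\h g=(1+|\cdot|^2)^s\h f$ and put the weight $(1+|M^{*j}\zeta|^2)^{-s}\lambda_j(M^{*j}\zeta)$ on the $\h g$-factor. This gives
\[
\sum_k|\langle f,\psi^s_{\mu,j,k}\rangle_{H^s}|^2\le\sup_{\eta\in\T^d}\Bigl(m^{-2js/d}\sum_{l\in\Z^d}|\h\psi_\mu(\eta+l)|^2\frac{(1+|M^{*j}(\eta+l)|^2)^s}{\lambda_j(M^{*j}(\eta+l))}\Bigr)\int_{\R^d}|\h f(\xi)|^2(1+|\xi|^2)^s\lambda_j(\xi)\,\dd\xi .
\]
By isotropy, for $|\zeta|\ge\frac12$ (which covers all $l\neq\nul$) the summand is $\lesssim(1+|\zeta|^2)^{s+\epsilon/2}|\h\psi_\mu(\zeta)|^2$. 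These terms are controlled by $[\h\psi_\mu,\h\psi_\mu]_{s+\epsilon/2}\in L_\infty$, which is where your $s'>s$ is genuinely used. The single term $l=\nul$ is $\lesssim(m^{-j/d}+|\eta|)^{2s-\epsilon}|\eta|^{2\alpha}$. It is bounded once $\epsilon\le2(s+\alpha)$, which is exactly where $\alpha>-s$ enters. Summing over $j$ then gives the Bessel bound.

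\textbf{Dual system.} This is not ``treated identically.'' Use the weight $(1+|M^{*j}\zeta|^2)^{s}\lambda_j(M^{*j}\zeta)$ on $\h h$, $h\in H^s$. Now $m^{2js/d}(1+|M^{*j}\eta|^2)^{-s}\asymp(m^{-j/d}+|\eta|)^{-2s}$ is not bounded uniformly in $j$ near $\eta=\nul$. The $l=\nul$ term is $\lesssim|\eta|^{2\w\alpha-2s-\epsilon}$, and it is $\w\alpha>s$ that saves it. The terms $l\neq\nul$ are handled by $[\h{\w\psi}_\mu,\h{\w\psi}_\mu]_{-s+\epsilon/2}\in L_\infty$, which follows from $\nu_2(\w\phi)>-s$. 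Alternatively, apply Cauchy--Schwarz over $j$ with the same $\lambda_j$ to the synthesis operator; this reaches the same bounded supremum without cross terms in $l$.
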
    

Let us discuss how to provide high order of simultaneous approximation for dual wavelet frames from Theorem~\ref{fDualWF}.
Suppose that $s \in \N$, so we can assume $\alpha = 0$ and, hence, conditions $\h {b_{\mu}}(\xi) = O(1)$
as $\xi \to 0$ for $\mu=2,\dots,u$ always hold. Suppose also that we provide somehow 
\textit{vanishing moments} conditions of order $n$ for $\h{\w b_{\mu}}$, i.e.
$\h {{\w b}_{\mu}}(\xi) = O(|\xi|^{n})$ as $\xi \to 0$ for $\mu=2,\dots,u$.
Let us show that then conditions~(\ref{cond6}) for scaling functions $\phi$ and $\w\phi$
are valid.
For this purpose we need to recall the sum rule condition for mask $a$ and 
the compatibility condition for a pair of masks $a$ and $\w a$.
We say that mask $a$ satisfies the \textit{sum rule} of order $n$, if
$\h a (M^{*-j}s + \xi) = O(|\xi|^n)$ as $\xi \to 0$ for all 
$s \in D(M^*)\setminus\{\nul\}.$ \textit{Compatibility condition} of order $n$ for a pair of masks $a$ and $\w a$ means that $1 - \h a(\xi) \overline{\h {\w a}(\xi)} = O(|\xi|^n)$ as $\xi \to 0$.
From~(\ref{fcalL2}) we have
$$
\h a(\xi + M^{*-1}s) \h {\w a}(\xi) + \sum_{\mu=2}^u \h{b_{\mu}}(\xi + M^{*-1}s) 
\h{\w b_{\mu}}(\xi) = \delta_{\nul,s}.
$$
Since $\h b_{\mu}$, $\mu=2,\dots,u$ are bounded and $\h{\w a}(\nul)=1$, 
then vanishing moments conditions of order $n$ for $\h{\w b_{\mu}}$ implies
the sum rule of order $n$ for $a$ and the compatibility condition of order $n$
for the pair of masks $a$ and $\w a$.

Next, let us show that the compatibility of masks implies the compatibility of scaling functions, i.e. the first inequality in~(\ref{cond6}) is justified. Indeed,
$1 - \h a(\xi) \overline{\h {\w a}(\xi)} = O(|\xi|^n)$ implies that 
$\h\phi(\xi) \overline{\h {\w\phi}(\xi)}\left(1 - \h a(\xi) \overline{\h {\w a}(\xi)}\right) = O(|\xi|^n)$ as $\xi \to 0$ and, hence,
$$
\left(1-\h\phi(M^*\xi) \overline{\h {\w\phi}(M^*\xi)}\right)-
\left(1 - \h\phi(\xi) \overline{\h {\w\phi}(\xi)}\right) = O(|\xi|^n)
$$
as $\xi \to 0$.
This is only possible if $1 - \h\phi(\xi) \overline{\h {\w\phi}(\xi)} = O(|\xi|^n)$ as $\xi \to 0$ (see, e.g., \cite[Section 2]{ChuiJiang} and~\cite[Section 5]{KMulti} for relevant details).

Finally, let us show that the sum rule of order $n$ for $a$ implies the second inequality in~(\ref{cond6}). 
It is known (see, e.g.,~\cite[Theorem 3.2.8]{Jetter2001}) that, if mask $a$ satisfies the sum rule of order $n$, then the corresponding
scaling function $\phi$ satisfies \textit{the Strang–Fix condition} of $n$, i.e.
$\h\phi(\xi + k) = O(|\xi|^n)$ as $\xi \to 0$ for $k\in\Z^d$, $k\neq\nul$.
Below, it is convenient to use the following notation: for $\beta \in \Z_d^+$ denote 
$[\beta] = \beta_1+\dots+\beta_d$.
Since $\phi \in H^s(\R^d)$ has compact support, 
then $(\cdot)^{\beta} \phi$, $\beta\in \Z_d^+$, is also in $H^s(\R^d)$ and has compact support. Due to the equality $\h{ (\cdot)^{\beta} \phi} = \left(\frac{1}{-2\pi i} \right)^{[\beta]} D^{\beta} \h\phi$ and by Remark~\ref{rem1} we have that $[ D^{\beta} \h\phi, D^{\beta} \h\phi]_s \in L_{\infty}(\R^d)$ for $[\beta] = n$.
Using Taylor's formula with Lagrange's remainder around the point $\xi=\nul$, we have
$$
|\h\phi(\xi + k)|^2 = \left| \frac{1}{n!} d^n \h\phi(\theta \xi+k, \xi) \right|^2 \le
C \sum_{[\beta]=n} |D^{\beta} \h\phi(\theta \xi+k)|^2 |\xi|^{2n},
$$
where $\theta \in [0,1]$. Hence, for $\xi\in\T^d$
$$
[\h\phi,\h\phi]_s^{\neq\nul}(\xi)  \le 
C \sum_{[\beta]=n}\sum\limits_{k \ne \nul}(1 + |\xi + k|^{2})^s |D^{\beta} \h\phi(\theta \xi+k)|^2 |\xi|^{2n} \le C' |\xi|^{2n}.
$$

The above consideration allows to state simultaneous approximation properties of 
dual wavelet frames in a pair of dual Sobolev spaces.

\begin{thm}
    Let $s \in \N$. Assume that $\{\psi_{\mu,j,k}\}$, $\{\widetilde\psi_{\mu,j,k}\}$ is a MRA-based wavelet system  in $(H^s(\R^d), H^{-s}(\R^d))$ generated by $\phi, \widetilde\phi$. If 
\begin{equation}
\h {{\w b}_{\mu}}(\xi) = O(|\xi|^{n}) \quad
\text{as } \xi \to 0, \quad \mu=2,\dots,u,
        \label{fVM}
    \end{equation}
for some $n>s$, then the MRA-based wavelet system produces a pair of dual wavelet frame in 
    $(H^s(\R^d), H^{-s}(\R^d))$, which provides simultaneous approximation order $(s,n)$ in 
    $H^n(\R^d)$, i.e. for all $f \in H^n(\R^d)$
        \begin{equation*}
        \sum_{r=0}^{s} \norm{M^{*j}}^{-r}
        \left| f - \sum\limits_{k \in \Z^d} \langle f, \w{\phi}_{0k} \rangle \phi_{0k} - 
    \sum_{j=0}^{+\infty}
        \sum_{\mu = 2}^{u}
        \sum_{k\in\mathbb{Z}^d}
        \langle f, \widetilde\psi_{\mu,j,k}\rangle
        \psi_{\mu,j,k} \right|_{H^{r}} \le
        C \norm{M^{*-j}}^{n} |f|_{H^n},
    \end{equation*}
    \label{fDualWF2}
\end{thm}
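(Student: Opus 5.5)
The plan is to reduce the statement to Theorem~\ref{theoMainAppr} and Theorem~\ref{fQjApprox}, using the perfect reconstruction property (Lemma~\ref{lemPRP}) and the frame expansion from Theorem~\ref{fDualWF}. The displayed inequality only makes sense with the outer wavelet sum truncated at level $j$. That is, the quantity to bound is $f$ minus the coarse part minus $\sum_{i=0}^{j-1}\sum_{\mu}\sum_k \langle f,\w\psi_{\mu,i,k}\rangle\psi_{\mu,i,k}$; the full infinite sum reproduces $f$ exactly. I will read the statement in this way.

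\textbf{Step 1 (frame property).} Since $s\in\N$, take $\alpha=0>-s$. Condition~(\ref{fVM}) with $n>s$ gives $\h{\w b_\mu}(\xi)=O(|\xi|^{\w\alpha})$ with $\w\alpha=n>s$, and $\h{b_\mu}=O(1)$ holds trivially because the masks are trigonometric polynomials. Theorem~\ref{fDualWF} then shows that the system is a dual wavelet frame in $(H^s(\R^d),H^{-s}(\R^d))$.

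\textbf{Step 2 (telescoping).} By Lemma~\ref{lemPRP} with levels $0$ and $j$,
\[
\sum_{k}\langle f,\w\phi_{0k}\rangle\phi_{0k}+\sum_{i=0}^{j-1}\sum_{\mu=2}^u\sum_k\langle f,\w\psi_{\mu,i,k}\rangle\psi_{\mu,i,k}=\sum_k\langle f,\w\phi_{jk}\rangle\phi_{jk}=Q_j(f).
\]
So the expression inside each seminorm equals $f-Q_j(f)$. The claimed inequality is therefore exactly the statement that $Q_j$ provides simultaneous approximation order $(s,n)$ in the sense of~(\ref{fApprox}).

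\textbf{Step 3 (conditions~(\ref{cond6})).} The discussion preceding the theorem gives the two parts. First, the biorthogonality relation~(\ref{fcalL2}) together with~(\ref{fVM}) yields the sum rule of order $n$ for $a$ and the compatibility of order $n$ for $(a,\w a)$. Compatibility of the masks passes to $|1-\ol{\h{\w\phi}}\h\phi|\le C|\xi|^n$ near $\nul$. Away from $\nul$ on $\T^d$ this bound holds trivially by boundedness of $\h\phi$ and $\h{\w\phi}$, after enlarging the constant. Second, the sum rule gives the Strang--Fix conditions, and the Taylor-remainder argument gives $[\h\phi,\h\phi]_s^{\neq\nul}(\xi)\le C|\xi|^{2n}$ on $\T^d$.

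\textbf{Step 4 (conclusion).} Theorem~\ref{theoMainAppr} then applies, or equivalently the sufficiency part of Theorem~\ref{fQjApprox}. For each $0\le r\le s$ it gives $\|M^{*j}\|^{-r}|f-Q_j(f)|_{H^r}\le C\|M^{*-j}\|^n|f|_{H^n}$, and summing over $r$ yields the claim.

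\textbf{Main obstacle.} The delicate point is the Taylor step in Step 3. It requires $\sum_{k\neq\nul}(1+|\xi+k|^2)^s|D^\beta\h\phi(\theta\xi+k)|^2$ to be uniformly bounded even though the intermediate point $\theta\xi$ depends on $k$. I would handle this by replacing the Lagrange remainder with the integral form, $\h\phi(\xi+k)=\sum_{[\beta]=n}c_\beta\xi^\beta\int_0^1(1-t)^{n-1}D^\beta\h\phi(t\xi+k)\,dt$, which is valid because the Strang--Fix conditions make all lower-order derivatives vanish at $k$. Cauchy--Schwarz in $t$ then bounds the sum by $\int_0^1[D^\beta\h\phi,D^\beta\h\phi]_s(t\xi)\,dt\cdot|\xi|^{2n}$. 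One must also check that $1+|\xi+k|^2$ and $1+|t\xi+k|^2$ are comparable for $\xi\in\T^d$ and $k\neq\nul$, and that the $s$-bracket is bounded, which Remark~\ref{rem1} provides since $(\cdot)^\beta\phi$ is compactly supported and lies in $H^s$.
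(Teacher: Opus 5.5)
Your proposal is correct and follows the paper's proof. The perfect reconstruction property reduces the truncated expansion to $\sum_k\langle f,\w\phi_{jk}\rangle\phi_{jk}$, and the vanishing moments~(\ref{fVM}) yield conditions~(\ref{cond6}) through the sum-rule, compatibility and Strang--Fix discussion preceding the theorem, so Theorem~\ref{theoMainAppr} (the sufficiency half of Theorem~\ref{fQjApprox}) gives the estimate. Your truncation at level $j-1$ is the one consistent with Lemma~\ref{lemPRP} (the paper's proof writes $\sum_{i=0}^{j}$, which only affects the constant), and your integral form of the Taylor remainder properly repairs the paper's Lagrange step, where the intermediate point $\theta\xi$ depends on $k$.
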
   

\textbf{Proof.} By the perfect reconstruction property
    \begin{equation*}
        \sum_{k\in\mathbb{Z}^d} 
        \langle f, \widetilde\phi_{jk}\rangle
        \phi_{jk} =
        \sum_{k\in\mathbb{Z}^d} 
        \langle f, \widetilde\phi_{0k}\rangle
        \phi_{0k}
        +
        \sum_{i=0}^{j}
        \sum_{\mu=2}^{u}
        \sum_{k\in\mathbb{Z}^d}
        \langle f, \widetilde\psi_{\mu,i,k}\rangle
        \psi_{\mu,i,k}.
    \end{equation*}
    It remains to apply Theorem~\ref{fQjApprox}, since conditions~(\ref{cond6}) are satisfied. $\qed$
    
The problem of constructing MRA‑based wavelet systems in the multivariate case with a given number of vanishing moments~(\ref{fVM}) has been actively studied in the literature. Several concrete examples of such systems can be found in, e.g.,~\cite{Han} (for $M=2 I_d$),~\cite{KPS},~\cite{Kr2019}.


\begin{thebibliography}{40}

\bibitem{Butzer2006}
Bardaro C., Butzer P.L., Stens R.L., Vinti G.: Approximation error of the Whittaker cardinal series in terms of an averaged modulus of smoothness covering discontinuous signals. J. Math. Anal. Appl. \textbf{316}(1), 269--306 (2006)

\bibitem{Butzer2010}
Bardaro C., Butzer P.L., Stens R.L., Vinti G.: Prediction by samples from the past with error estimates covering discontinuous signals. IEEE Trans. Inf. Theory \textbf{56}(1), 614--633 (2010)

\bibitem{SanAntolin2025}
Boukeffous Ch., San Antolín A.: On simultaneous density order from shift invariant subspaces in Sobolev spaces. J. Approx. Theory \textbf{308}, Art. 106147 (2025)

\bibitem{SanAntolinL2}
Boukeffous C., San Antolín A.: On density order of quasi-projection operators and dual wavelet frames. Banach J. Math. Anal. \textbf{19}, 8 (2025)

\bibitem{SanAntolin2026}
Boukeffous C., San Antolín A.: Simultaneous density order of finitely generated shift-invariant subspaces in Sobolev spaces. Rev. Mat. Complut. (2026)

\bibitem{Butzer2005}
Butzer P.L., Higgins J.R., Stens R.L.: Classical and approximate sampling theorems: studies in the $L_p(\mathbb{R})$ and the uniform norm. J. Approx. Theory \textbf{137}(2), 250--263 (2005)

\bibitem{Caravetta91}
Cavaretta A.S., Dahmen W., Micchelli C.A.: Stationary Subdivision. Mem. Amer. Math. Soc. \textbf{453}, Amer. Math. Soc., Providence (1991)

\bibitem{ChuiJiang}
Chui C., Jiang Q.: Balanced multi-wavelets in $\mathbb{R}^s$. Math. Comput. \textbf{74}, 1323--1344 (2005)

\bibitem{Lebedeva}
Gorshanova A.A., Lebedeva E.A.: On the convergence of expansions in systems of dyadic wavelets. Algebra Anal. \textbf{37}(5), 179--197 (2025) [translated as St. Petersburg Math. J.]

\bibitem{Han}
Han B., Shen Z.: Dual wavelet frames and Riesz bases in Sobolev spaces. Constr. Approx. \textbf{29}(3), 369--406 (2009)

\bibitem{HanSmooth}
Han B.: Computing the smoothness exponent of a symmetric multivariate refinable function. SIAM J. Matrix Anal. Appl. \textbf{24}(3), 693--714 (2002)

\bibitem{Jetter2001}
Jetter K., Plonka G.: A survey on $L_2$-approximation orders from shift-invariant spaces. In: Multivariate Approximation and Applications, pp. 73--111. Cambridge University Press, Cambridge (2001)

\bibitem{Jia}
Jia R.-Q.: Approximation by quasi-projection operators in Besov spaces. J. Approx. Theory \textbf{162}(1), 186--200 (2010)

\bibitem{Jia1998ApproxProp}
Jia R.-Q.: Approximation properties of multivariate wavelets. Math. Comp. \textbf{67}, 647--665 (1998)

\bibitem{KKS}
Kolomoitsev Yu., Krivoshein A., Skopina M.: Differential and falsified sampling expansions. J. Fourier Anal. Appl. \textbf{24}(5), 1276--1305 (2018)

\bibitem{KS}
Kolomoitsev Yu., Skopina M.: Quasi-projection operators in weighted $L_p$ spaces. Appl. Comput. Harmon. Anal. \textbf{52}, 165--197 (2021)

\bibitem{KrS11}
Krivoshein A., Skopina M.: Approximation by frame-like wavelet systems. Appl. Comput. Harmon. Anal. \textbf{31}(2), 410--428 (2011)

\bibitem{KrS17}
Krivoshein A., Skopina M.: Multivariate sampling-type approximation. Anal. Appl. \textbf{15}(4), 521--542 (2017)

\bibitem{KPS}
Krivoshein A., Protasov V., Skopina M.: Multivariate Wavelet Frames. Industrial and Applied Mathematics. Springer, Singapore (2016)

\bibitem{KMulti}
Krivoshein A.: Approximation by frame-like multiwavelets. Anal. Appl. \textbf{22}(5), 881--911 (2024)

\bibitem{Kr2019}
Krivoshein A.V.: From frame-like wavelets to wavelet frames keeping approximation properties and symmetry. Appl. Math. Comput. \textbf{344--345}, 204--218 (2019)

\bibitem{Kyriazis}
Kyriazis G.: Approximation of distribution spaces by means of kernel operators. J. Fourier Anal. Appl. \textbf{2}(3), 261--286 (1995)

\bibitem{Zhang18}
Li Y.-Z., Zhang J.-P.: Nonhomogeneous dual wavelet frames and mixed oblique extension principles in Sobolev spaces. Appl. Anal. \textbf{97}(7), 1049--1073 (2018)

\bibitem{RS2}
Ron A., Shen Z.: Affine systems in $L_2(\mathbb{R}^d)$ II: dual systems. J. Fourier Anal. Appl. \textbf{3}, 617--637 (1997)

\bibitem{Rudin}
Rudin W.: Principles of Mathematical Analysis, 3rd edn. McGraw-Hill, New York (1976)

\bibitem{Zhao}
Zhao K.: Simultaneous approximation from PSI spaces. J. Approx. Theory \textbf{81}(2), 166--184 (1995)

	\end{thebibliography}
\end{document}